\numberwithin{equation}{section}
\def\endproof{$\hfill\Box$\\}
\def\s{\,\,\,\,}
\def\R{\mathbb{R}}
\def\C{\mathbb{C}}
\def\N{\mathbb{N}}
\def\Z{\mathbb{Z}}
\newcommand{\me}{\mathrm{e}}
\newcommand{\mi}{\mathrm{i}}
\numberwithin{equation}{section}
\newtheorem{theorem}{Theorem}[section]
\newtheorem{lem}[theorem]{Lemma}
\newtheorem{thm}[theorem]{Theorem}
\newtheorem{pro}[theorem]{Proposition}
\newtheorem{cor}[theorem]{Corollary}
\newtheorem{defi}[theorem]{Definition}
\newtheorem{rem}[theorem]{Remark}
\def\diam{\mbox{diam}}
\def\injrad{\mbox{injrad}}
\newcounter{Cnumber}
\title[ ]
{\bf \bf  Weak Convergence Of Branched Conformal Immersions With Uniformly Bounded Areas And Willmore Energies}
\author[ ]{Guodong Wei}
\subjclass[2010]{58E20 (35J35)}
\keywords{Willmore functional, bubble tree, conformal immersion}
\date{}
\begin{document}
\maketitle
\begin{abstract}
In this paper, we firstly extend Theorem 5.1.1 in \cite {Helein} due to H\'elein to a rescaled branched conformal immersed sequence(c.f. Theorem 1.5). By virtue of this local convergence theorem, we study the blowup behavior of a sequence of branched conformal immersions of closed Riemannian surface in $\mathbb{R}^{n}$ with uniformly bounded areas and Willmore energies. Furthermore, we prove that the integral identity of Gauss curvature is true.
\end{abstract}

\section{Introduction}

The study of the conformal immersions from a Riemannian manifold into another Riemannian manifold has been an important topic in conformal geometry and has a long history. There are a great many mathematicians devoting themselves to this area (c.f. \cite{bryant, Chen, Chen-Li, Kuwert-Schatzle, lamm-scha, moore, Muller- sverak, riviere2} and references therein). Especially, some mathematicians employed the so-called $W^{2,2}$ conformal immersions of surfaces to study the minimizer of Willmore functional in the last ten years(c.f. \cite{Chen-Li, Kuwert-Li, Kuwert-scha, Riviere, riviere1}). A basic tool which they adopted to study this problem is the following local convergence theorem (c.f. Theorem 5.1.1 in \cite {Helein}) which was established by F. H\'elein and has played an important role in the analysis of 2-dimensional conformally invariant problems.

Let
 $$\mathcal{C}_\gamma(D)=\left\{f\in C^{\infty}(\bar{D},\R^{n})| f\ \mbox{is a conformal immersion, and}\  \int_D|A_f|^{2}d\mu_f\leq\gamma\right\}.$$
Here $A_f$ denotes the second fundamental form of the conformal immersion $f:D\rightarrow {\R}^n$, where $D$ is a disc in $\R^2$. H\'elein's theorem can be stated as follows.
\begin{thm}[\cite{Helein}] \label{helein}
Let $\overline{\mathcal{C}_\gamma(D)}$ be the closure of $\mathcal{C}_\gamma(D)$ with respect to the weak topology of $H^{1}(B,\R^{n})$. Then, for all $\gamma\leq\frac{8\pi}{3}$, every map $f\in\overline{\mathcal{C}_\gamma(D)}$ is either a constant map or a bi-Lipschitz conformal immersion.
\end{thm}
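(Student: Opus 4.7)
\medskip
\noindent\textbf{Proof plan.} The plan is to represent each $f_k\in\mathcal{C}_\gamma(D)$ locally in isothermal coordinates, reduce the analysis to a Liouville-type equation for the conformal factor $u_k=\tfrac{1}{2}\log\bigl(|\partial_{x_1}f_k|^2\bigr)$, and then exploit the Jacobian structure of that equation via Wente's inequality to obtain $L^\infty$ control of $u_k$ up to a harmonic correction. The dichotomy (constant versus bi-Lipschitz immersion) will emerge from the behaviour of the harmonic part of $u_k$ in the limit $k\to\infty$.

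The first step is to produce, for each $f_k$, an orthonormal frame $(e_1^k,e_2^k)$ of the tangent bundle $f_k^{\ast}T\R^n$ that satisfies the Coulomb gauge $\mathrm{div}(e_1^k\cdot\nabla e_2^k)=0$. H\'elein's moving frame lemma provides such a frame together with the energy bound $\|\nabla e_\alpha^k\|_{L^2(D)}^2\leq C\int_D|A_{f_k}|^2\,\dif\mu_{f_k}\leq C\gamma$. A direct calculation using conformality and the structure equations then shows that $u_k$ satisfies
$$
-\Delta u_k \;=\; \partial_{x_1}e_1^k\cdot\partial_{x_2}e_2^k-\partial_{x_2}e_1^k\cdot\partial_{x_1}e_2^k,
$$
a Jacobian on the right-hand side. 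Splitting $u_k=v_k+h_k$ on a subdisc with $v_k|_{\partial}=0$ and $h_k$ harmonic, the sharp Wente inequality yields $\|v_k\|_{L^\infty}+\|\nabla v_k\|_{L^2}\leq C\|\nabla e_1^k\|_{L^2}\|\nabla e_2^k\|_{L^2}\leq C\gamma$; the hypothesis $\gamma\leq 8\pi/3$ is precisely the threshold at which the sharp Wente constant combined with the frame estimate keeps $\|v_k\|_\infty$ below the critical level at which the factor $e^{v_k}$ can degenerate.

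The dichotomy then follows by passing to a subsequence. Since $f_k$ is weakly bounded in $H^1$, the area $\int_D e^{2u_k}\,\dif x$ is uniformly bounded, which in turn forces $h_k$ to be bounded above in $L^1_{\mathrm{loc}}$ and hence from above in $L^\infty_{\mathrm{loc}}$ by the mean value property. If $h_k$ stays bounded in $L^\infty_{\mathrm{loc}}$, then $u_k$ is uniformly bounded, $e^{u_k}$ is pinched between two positive constants, and elliptic regularity applied to the Jacobian PDE upgrades weak $H^1$ convergence to locally uniform convergence $f_k\to f$ with $c\leq|\nabla f|\leq C$ a.e., so $f$ is a bi-Lipschitz conformal immersion. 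If instead $h_k\to-\infty$ on some subdisc, then $e^{u_k}\to 0$ uniformly there, hence $|\nabla f_k|\to 0$ locally and the weak limit $f$ is constant.

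The principal obstacle is the sharp constant in the Wente inequality, which is the heart of the proof and directly produces the threshold $8\pi/3$; its verification requires compensated compactness and Hardy space techniques. A secondary difficulty is that a bi-Lipschitz bound is not automatic under weak $H^1$ convergence, so one genuinely needs the upgrade to locally uniform convergence coming from the Jacobian structure in order to preserve the lower bound on $|\nabla f|$ in the limit.
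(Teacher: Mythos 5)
This statement is quoted by the paper from H\'elein's book (Theorem 5.1.1 there); the paper supplies no proof of its own, so your proposal can only be measured against H\'elein's original argument. Your overall architecture --- Coulomb moving frame with energy controlled by $\int_D|A_{f_k}|^2\,d\mu_{f_k}$, the Jacobian equation $-\Delta u_k=\partial_{x_1}e_1^k\cdot\partial_{x_2}e_2^k-\partial_{x_2}e_1^k\cdot\partial_{x_1}e_2^k$ for the conformal factor, Wente's inequality for the Dirichlet part $v_k$, and the dichotomy driven by the harmonic part $h_k$ (bounded above via the subharmonicity of $e^{2h_k}$ and the area bound, then either locally bounded or tending to $-\infty$ by Harnack) --- is exactly the structure of H\'elein's proof, and that part of your plan is sound.

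The genuine error is your explanation of where $\gamma\leq\frac{8\pi}{3}$ enters. Wente's inequality $\|v_k\|_{L^\infty}+\|\nabla v_k\|_{L^2}\leq C\|\nabla e_1^k\|_{L^2}\|\nabla e_2^k\|_{L^2}$ holds \emph{unconditionally}, with a universal constant, for any Jacobian right-hand side; there is no ``critical level at which $e^{v_k}$ can degenerate,'' so the threshold cannot come from the sharp Wente constant as you assert. The constant $\frac{8\pi}{3}$ is the hypothesis of H\'elein's moving-frame construction lemma (Lemma 5.1.4 in his book): it is the energy level below which one can produce, by a variational/gauge argument, an orthonormal Coulomb frame satisfying $\int_D|\nabla e^k|^2\leq C\int_D|A_{f_k}|^2\,d\mu_{f_k}$ in the first place. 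If $\gamma>\frac{8\pi}{3}$ the frame with controlled energy may fail to exist and the whole scheme collapses at step one, not at the Wente step. This is precisely why Kuwert--Li's improvement of the constant to $\gamma_n$ (Theorem 5.1 in their paper, and the route taken by Theorem \ref{solution} above) abandons moving frames entirely and instead writes $K_{f}e^{2u}$ as a sum of Jacobians of the Gauss map \`a la M\"uller--\v{S}ver\'ak. A secondary, smaller gap: a pointwise bound $c\leq|\nabla f|\leq C$ for the limit gives a Lipschitz conformal map with nondegenerate conformal factor, but the bi-Lipschitz property of $f$ onto its image requires an additional argument (locally uniform convergence plus the quantitative lower bound on $|f(x)-f(y)|$ coming from the conformal factor estimate), which you should not dismiss as automatic.
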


The main motivation of the present paper is to study how does the topology change in the process of blowing up for a sequence of branched conformal immersions of a closed Riemann surface into $\mathbb{R}^{n}$ with uniformly bounded area and Willmore energy. Before stating the main result, we need to introduce some notions.

For an immersion $f:\Sigma\rightarrow \mathbb{R}^{n}$  of a compact Riemannian surface, we denote by $g_{f}=f^{*}g_{euc}=df\otimes df$ the pull-back metric, $\mu_{f}$ the induced area measure on $\Sigma$ and $\Delta_{g_{f}}$ the Laplace-Beltrami operator. It is well known that
\begin{equation}
\Delta_{g_{f}}f=\overrightarrow{H_{f}},
\end{equation}
where $\overrightarrow{H_{f}}$ is the mean curvature vector corresponding to the immersion $f$. The Willmore functional (or Willmore energy) of $f$ is defined by
\begin{equation*}
W(f)=\frac{1}{4}\int_{\Sigma}|H_{f}|^{2}d\mu_{f}.
\end{equation*}
By the Gauss equation and the Gauss-Bonnet theorem,  the Willmore functional can be also written as
\begin{equation}\label{willmore}
W(f)=\frac{1}{4}\int_{\Sigma}|A_{f}|^{2}d\mu_{f}+\pi\chi(\Sigma),
\end{equation}
where $\chi(\Sigma)$ is the Euler characteristic of $\Sigma$. It is also well known that the Willmore functional $W(f)$ is of the invariance under the conformal transformations of $\R^{n}$ (c.f. \cite{Chen, Willmore}).

Next, we recall another two important notions: $W^{2,2}$ conformal immersion and $W^{2,2}$ branched conformal immersion.
\begin{defi}
Let $\left(\Sigma,g\right)$ be a Riemann surface. A map $f\in W^{2,2}\left((\Sigma,g), \mathbb{R}^{n}\right)$ is called a $W^{2,2}$ conformal immersion if the induced metric $g_{f}=df\otimes df$ is given by
$$g_{f}=\me^{2u}g\s\s\mbox{with}\s u\in\ L^{\infty}(\Sigma).$$
We say $f\in W^{2,2}_{conf,loc}\left((\Sigma,g), \mathbb{R}^{n}\right)$ if $f\in W^{2,2}_{loc}\left((\Sigma,g), \mathbb{R}^{n}\right)$ and $u\in L^{\infty}_{loc}(\Sigma)$.

Moreover, a map $ f:\Sigma\rightarrow\mathbb{R}^{n} $ is called a $W^{2,2}$ branched conformal immersion with possible branch points $x_{1}, \cdots, x_{m}$ if $ f \in\ W^{2,2}_{conf,loc}(\Sigma\backslash \{x_{1}, \cdots, x_{m}\},\mathbb{R}^{n})$  with
 $$\int_{\Sigma\backslash \{x_{1}, \cdots, x_{m}\}}(1+|A_{f}|^{2})d\mu_{f}<\infty .$$
 \end{defi}

Given a Riemann surface $(\Sigma, g)$, we denote the set of all $W^{2,2}$ conformal immersions  and all $W^{2,2}$ branched conformal immersions of $(\Sigma, g)$ by $W^{2,2}_{conf}\left((\Sigma,g), \mathbb{R}^{n}\right)$ and $W^{2,2}_{b, c}((\Sigma, g), \R^{n})$ respectively. Obviously, $W^{2,2}_{conf}((\Sigma, g), \mathbb{R}^{n})$ and $W^{2,2}_{b, c}((\Sigma, g), \R^{n})$ depend only on the conformal class of $(\Sigma, g)$, not on the choice of $g$.
 \begin{rem}
We note that it follows from Theorem \ref{remove} (see it in Section 2) that every possible branch point $p\in \Sigma$ of a $W^{2,2}$ branched conformal immersion $f\in W^{2,2}_{b, c}((\Sigma, g), \R^{n})$ has a well-defined branching order $m(p)\in \N_{0}$.
 \end{rem}
Here we make a convention that $\Sigma_{k}=(\Sigma,h_{k})$ denotes a Riemann surface which is equipped a smooth metric $h_{k}$ with constant Gauss curvature $\pm1$, or $(\Sigma,h_{k})$ is just a torus of genus 1 with Gauss curvature $0$.

One pays attention to conformal immersions because of the conformal immersions are of two advantages. One is that the conformal diffeomorphism group of $(\Sigma,h_k)$ is rather small comparing with the group of diffeomorphisms. The other is: if $g_{f}=e^{2u}g_{euc}$ in an isothermal coordinate chart, then one can estimate $\|u\|_{L^{\infty}}$ from the compensated compactness property of $K_{f}e^{2u}$. Thus it is possible to get an upper bound of $\|f\|_{W^{2,2}}$ via the equation $\Delta_{g_{f}}f=H_{f}$.

Here we want to list some local convergence results. When $m=0$, the following theorem is due to H\'{e}lein with $\gamma_n=\frac{8\pi}{3}$ (c.f. Theorem \ref{helein}) and later, Kuwert and Li proved $\gamma_n$ is the optimal constant(c.f. Theorem 5.1 in \cite{Kuwert-Li}); In the case $m\geq1$, the theorem was proved by Lamm and Nguyen (c.f. Theorem 3.2 in \cite{lamm-nguyen}).
\begin{thm}[\cite{lamm-nguyen}]\label{thm1}
Let $f_{k} \in  W^{2,2}_{conf,loc}(D\backslash\{0\},\ \mathbb{R}^{n})$ be a sequence of branched conformal immersions with branching order m at $0$. Denote the induced metric of $f_k$ by $g_{f_k}=e^{2u_{k}}g_{euc}$ (i.e. $u_{k}(z)=m\log|z|+\omega_{k}(z)$). Assume that $ \mu_{f_{k}}(D)\leq C$, $ f_{k}(0)=0 $ and
$$\int_{D}|A_{f_{k}}|^{2}d\mu_{f_{k}}\leq\gamma<\gamma_{n}=\left\{
\begin{aligned}
8\pi\s\s \mbox{if} \s n=3,\\
4\pi\s\s\mbox{if}\s n\geq4.
\end{aligned}
\right.
$$
Then $f_{k}$ is also bounded in $W^{2,2}_{loc}(D,\mathbb{R}^{n})$ and there is a subsequence such that one of the following two alternatives holds:
\begin{enumerate}
\item[(1)] $f_{k}$ converge weakly in $W^{2,2}_{loc}(D,\mathbb{R}^{n})$ to a conformal immersion $f$ with branching order $m$ at $0$, i.e.,\ $g_{f}=|z|^{2m}e^{\omega(z)}\delta_{ij}$, moreover,  $$\|\omega_{k}\|_{L^{\infty}(D_{r}(0))}+\|D\omega_{k}\|_{L^{2}(D_r(0))}\leq C(r).$$
\item[(2)] $\omega_{k}\rightarrow -\infty$ and $f_{k}\rightarrow 0$ locally uniformly on $D$.
\end{enumerate}
\end{thm}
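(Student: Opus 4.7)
The plan is to control the conformal factor $\omega_k$ through the Liouville equation and then promote this to $W^{2,2}$ bounds on $f_k$ via the conformal immersion equation $\Delta f_k = e^{2u_k}\vec H_{f_k}$. On $D\setminus\{0\}$ the Gauss equation reads $-\Delta u_k = K_{f_k} e^{2u_k}$; harmonicity of $m\log|z|$ gives $-\Delta \omega_k = K_{f_k}e^{2u_k}$ on the punctured disc. Since $\omega_k$ is locally bounded on $D\setminus\{0\}$ and
\begin{equation*}
\int_D |K_{f_k}|\,e^{2u_k}\,dx \;\leq\; \tfrac{1}{2}\int_D |A_{f_k}|^2\,d\mu_{f_k} \;\leq\; \tfrac{\gamma}{2},
\end{equation*}
the equation extends distributionally to all of $D$ with no concentrated mass at the origin.

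I would then split $\omega_k = v_k + h_k$, where $v_k$ solves the Dirichlet problem $-\Delta v_k = K_{f_k}e^{2u_k}$ in $D$ with $v_k=0$ on $\partial D$, and $h_k$ is harmonic. The threshold $\gamma_n$ is calibrated so that the relevant curvature integral falls strictly below the Wente / M\"{u}ller--\v{S}ver\'{a}k threshold $4\pi$: in codimension one one exploits the sharper estimate $K^+_{f_k}\leq \tfrac{1}{2}|H_{f_k}|^2$ together with $|H_{f_k}|^2\leq 2|A_{f_k}|^2$ to accommodate the larger constant $8\pi$, while in higher codimension only $|K_{f_k}|\leq \tfrac{1}{2}|A_{f_k}|^2$ is available. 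In either case Wente-type theory gives
\begin{equation*}
\|v_k\|_{L^\infty(D)} + \|Dv_k\|_{L^2(D)} \leq C(\gamma,n).
\end{equation*}

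For the harmonic part, the area bound rewrites as $\int_D |z|^{2m}e^{2\omega_k}\,dx \leq C$, which combined with the bound on $v_k$ yields $\int_D |z|^{2m} e^{2h_k}\,dx \leq C'$. Since $e^{2h_k}$ is subharmonic and the weight $|z|^{2m}$ is bounded below on any annulus $\{r\leq |z|\leq 1-r\}$, the submean-value inequality pins $h_k$ above on such annuli; the maximum principle then propagates the upper bound across the origin, and Harnack's inequality for harmonic functions forces the dichotomy: either $\{h_k\}$ is bounded in $L^\infty_{loc}(D)$ (hence in $C^\infty_{loc}$ by interior estimates), or $h_k\to -\infty$ locally uniformly on $D$.

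In the first branch of the dichotomy, $\omega_k$ is bounded in $L^\infty_{loc}\cap W^{1,2}_{loc}$, so $e^{2u_k}=|z|^{2m}e^{2\omega_k}$ is locally bounded. Combining with $\int|\vec H_{f_k}|^2\,d\mu_{f_k}\leq C$, $\|Df_k\|_{L^2(D)}^2 = 2\mu_{f_k}(D)\leq C$, and $f_k(0)=0$, Calder\'{o}n--Zygmund applied to $\Delta f_k = e^{2u_k}\vec H_{f_k}$ produces the $W^{2,2}_{loc}$ bound; a subsequence converges weakly to the desired $f$, whose branching order at $0$ is identified as $m$ via the removable-singularity result of Section~2. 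In the second branch, $|Df_k|^2 = 2e^{2u_k}\to 0$ uniformly on compacts, whence $f_k(0)=0$ forces $f_k\to 0$ locally uniformly. The principal obstacle is the dichotomy step: the degenerate weight $|z|^{2m}$ prevents direct submean-value control of $h_k$ at the origin, so the bound must be transported from annuli $\{|z|\geq r\}$ to the full disc via harmonicity and the maximum principle.
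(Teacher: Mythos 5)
The paper gives no proof of this theorem: it is quoted verbatim from Lamm--Nguyen (Theorem 3.2 there) for $m\geq 1$, and from H\'elein/Kuwert--Li for $m=0$, so there is no internal argument to compare against. Your outline is essentially the standard proof used in those references, and it is the same circle of ideas the author redeploys when proving Theorem 1.5: write $-\Delta\omega_k=K_{f_k}e^{2u_k}$ (the absence of a Dirac mass at $0$ for $\omega_k$ being exactly the content of Theorem \ref{remove}), solve for the ``curvature part'' $v_k$ with $L^\infty\cap W^{1,2}$ control, run a Harnack/maximum-principle dichotomy on the harmonic remainder $h_k$, and bootstrap through $\Delta f_k=e^{2u_k}\vec H_{f_k}$; your handling of the degenerate weight $|z|^{2m}$ (sub-mean-value bounds on annuli, then the maximum principle across the origin) is the right fix. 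Two details deserve tightening. First, your justification of the thresholds $8\pi$ versus $4\pi$ via $K^{+}\leq\tfrac12|H|^2$ is not the actual mechanism: the constants come from the M\"uller--\v{S}ver\'ak Hardy-space analysis of the Gauss map (into $S^2$ in codimension one, into the Grassmannian for $n\geq 4$), and the clean statement to invoke is the paper's Theorem \ref{solution}, noting that it is stated for unbranched immersions and must be extended to the branched case (which works because the Jacobian structure of $K e^{2u}$ and the smallness hypothesis $\int|A|^2 d\mu<\gamma_n$ are unaffected by the branch point). Second, to identify $g_f=|z|^{2m}e^{2\omega}\delta_{ij}$ in alternative (1) you need pointwise a.e.\ convergence of $\omega_k$, which should be extracted from the $L^\infty_{loc}\cap W^{1,2}_{loc}$ bounds by compact embedding before passing to the limit in $df_k\otimes df_k$. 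Neither point is a structural gap.
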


Here, we should mention that Kuwert and Li \cite{Kuwert-Li} discussed the compactness for a sequence of conformal immersions of a compact Riemann surface. More precisely, they proved that, if $f_{k}\in W^{2,2}_{conf}(\Sigma_{k}, \mathbb{R}^{n})$ are conformal immersions with $W(f_k)<\Lambda<\infty$, and $\Sigma_{k}$ converge to $\Sigma$ in moduli space, then there exist M\"{o}bius transformations $\sigma_{k}$ and diffeomorphisms $\phi_k$, such that $\sigma_{k}\comp f_{k}\comp \phi_k$ converge to $f_{0}$ locally in weak $W^{2,2}$ sense on $\Sigma$ minus finitely many concentration points and the weak limit $f_{0}$ is a $W^{2,2}$ branched conformal immersion. Moreover, $f_0$ is unbranched and an embedding if $\Lambda<8\pi$.  Rivi\'ere \cite{riviere1} also proved the same results. With the additional condition $\mu_{f_{k}}(\Sigma_{k})\leq C$, furthermore Chen and Li \cite{Chen-Li} studied the Hausdorff limit of $f_{k}(\Sigma)$ and proved the area identity.

\medskip

Eschenburg and Tribuzy \cite{Eschenburg-Tribuzy} have ever studied the Gauss-Bonnet formula for branched conformal immersions. Following the work \cite{Kuwert-Li}, \cite{riviere1} and \cite{Chen-Li}, naturally, one wants to know whether the integral of Gauss curvature is maintained after blowing up? For the present problem, we can not neglect the bubbles which shrink to a point. This is quite different from the case in \cite {Chen-Li}, there the bubbles shrinking to a point can be directly ignored when one discussed the Horsdorff limit and area identity. The main reason is that Theorem \ref{helein} and Theorem \ref{thm1} does not guarantee the limit is an immersion. For example, let $S^{2}$ be the standard unit two-sphere in $\R^{3}$ and then define a sequence of conformal immersions $f_k:S^{2}\rightarrow \R^{3}$ by $f_k(x)=\frac{x}{k}$, then for each $k$, it is easy to check that
 \begin{equation}
 \int_{S^{2}}K_{f_{k}}d\mu_{f_k}=4\pi\s\s\s\s \mbox{and}\s\s\s\s  \int_{S^{2}}|A_{f_{k}}|^{2}d\mu_{f_k}=8\pi.
 \end{equation}
However, the limit of $f_k$ is a point map.

Based on the above consideration we need to study those bubbles which carry nontrivial topology but shrink to a point. In order to recover the lost curvature integral at these bubbles, a natural attempt is to rescale $f_k$ by an appropriate constant to make the limit nontrivial. Fortunately, by employing the compensated compactness property of $K_{f_k}e^{2u_k}$ (c.f. \cite {Muller- sverak, Helein, Kuwert-Li, riviere1}), Simon's monotonicity formula (c.f. \cite{Simon}) and Theorem \ref{thm1} we obtain the following theorem which can be viewed as an extension of H\'elein's local convergence theorem, i.e., Theorem \ref{helein}.

\begin{thm}\label{thm2}
Let $f_{k}: D\longrightarrow\mathbb{R}^{n}$ be a sequence of $W^{2,2}$ branched conformal immersions with branching order m at $0$ which satisfies
\begin{enumerate}
\item[(1)] $$\int_{D}|A_{f_{k}}|^{2}d\mu_{f_{k}}\leq\gamma<\gamma_{n}=\left\{
\begin{aligned}
8\pi\s\s\mbox{ if }\s n=3,\\
4\pi\s\s\mbox{ if }\s n\geq4.
\end{aligned}
\right.$$
\item[(2)] $f_{k}(D)$  can be extended to a closed branched immersed surface $f_{k}:\Sigma\rightarrow \R^{n}$ with
$$\int_{\Sigma}|A_{f_{k}}|^{2}d\mu_{f_{k}}<\Lambda.$$
\end{enumerate}
Take a curve $\gamma:[0,1]\longrightarrow D$ and set $\lambda_{k}=\mbox{diam}f_{k}(\gamma)$, then we can find a subsequence of $\{f_{k}'=\frac{f_{k}-f_{k}(\gamma(0))}{\lambda_{k}}\}$ which converges weakly in $W^{2,2}_{loc}(D)$ to a $W^{2,2}$ branched conformal immersion $f_{0}$ with branching order m at $0$. Moreover, there holds true
$$\|\omega_{k}'\|_{L^{\infty}(D_r(0))}\leq C(r),$$
where $\omega_{k}'$ satisfies
$$df_{k}'\otimes df_{k}'=e^{2(\omega_{k}'+m\log|z|)}(dx^{1}\otimes dx^{1}+dx^{2}\otimes dx^{2}).$$
\end{thm}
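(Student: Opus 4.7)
My plan is to reduce the claim to Theorem \ref{thm1} applied to the rescaled sequence $f_k'$ and to rule out the degeneration alternative using the normalization $\diam f_k'(\gamma)=1$.

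\textbf{Scale invariance and area control.} First I would note that rescaling is a similarity, so $f_k'$ is a $W^{2,2}$ branched conformal immersion on $D$ with the same branching order $m$ at $0$, and the conformal factor transforms as $\omega_k'=\omega_k-\log\lambda_k$. The curvature integral $\int_D|A_{f_k'}|^2\,d\mu_{f_k'}\leq\gamma<\gamma_n$ and the Willmore energy $W(f_k')=W(f_k)$ are both scale-invariant; in particular $f_k'$ extends to a closed branched immersion of $\Sigma$ into $\R^n$ with Willmore energy controlled by $\Lambda$ and $\chi(\Sigma)$. Applying Simon's monotonicity formula to this closed extension yields
\[
\mu_{f_k'}\bigl((f_k')^{-1}(B_R(y))\bigr)\leq C(\Lambda)\,R^2\qquad\text{for all }y\in\R^n,\ R>0,
\]
uniformly in $k$. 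Combining this density estimate with the fact that $f_k'(\gamma)\subset\overline{B_1(0)}$ (by the construction of $\lambda_k$) should produce a uniform domain-side area bound $\mu_{f_k'}(D_r(0))\leq C(r)$ for each $r<1$ with $\gamma([0,1])\Subset D_r(0)$.

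\textbf{Application of Theorem \ref{thm1} and ruling out collapse.} After a harmless further translation to arrange $f_k'(0)=0$, all hypotheses of Theorem \ref{thm1} are then in place on $D_r(0)$. Passing to a subsequence, Theorem \ref{thm1} produces one of two alternatives: either (a) weak $W^{2,2}_{loc}$ convergence to a $W^{2,2}$ branched conformal immersion $f_0$ with branching order $m$ at $0$ and $\|\omega_k'\|_{L^\infty(D_{r'}(0))}\leq C(r')$ for each $r'<r$, or (b) $\omega_k'\to-\infty$ and $f_k'\to 0$ locally uniformly on $D_r(0)$. The second alternative forces $f_k'\to 0$ uniformly on the compact set $\gamma([0,1])$, whence $\diam f_k'(\gamma)\to 0$, contradicting $\diam f_k'(\gamma)=1$. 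Therefore alternative (a) must hold, and a standard diagonal argument over $r\nearrow 1$ promotes the convergence to weak $W^{2,2}_{loc}(D)$ on the full disc, yielding both $f_0$ and the claimed $L^\infty$ bound.

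\textbf{Main obstacle.} The technical heart of the argument will be the area-bound step, converting Simon's ambient density estimate into the domain-side area bound required by Theorem \ref{thm1}. A priori nothing forces the image $f_k'(D_r(0))$ to lie in a fixed Euclidean ball even though $f_k'(\gamma)\subset\overline{B_1(0)}$. What saves the situation is the smallness hypothesis $\gamma<\gamma_n$ (which, through the local theory of conformal immersions underlying Theorem \ref{thm1}, prevents the immersion from folding or wrapping drastically) combined with the choice $\lambda_k=\diam f_k(\gamma)$, which pins down the scale at which a specific piece of the image is seen. This is precisely where hypotheses (1) and (2) interact, and it explains why both are needed in the statement.
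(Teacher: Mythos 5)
There is a genuine gap, and it sits exactly where you flagged your ``main obstacle'': the passage from Simon's ambient density estimate to the domain-side area bound $\mu_{f_k'}(D_r(0))\leq C(r)$. Simon's monotonicity controls the area of the image inside each fixed ball $B_R(y)$, so it yields a total area bound on $D$ only if the image $f_k'(D)$ (or of the relevant subdisc) stays in a ball of uniformly bounded radius. Nothing in the hypotheses forces this: the smallness $\gamma<\gamma_n$ does not prevent $\mbox{diam}\,f_k'(D)\rightarrow\infty$ (a long, nearly flat tongue has tiny curvature integral and unbounded diameter), and the normalization $\lambda_k=\mbox{diam}\,f_k(\gamma)$ only pins down the scale of the image of the curve $\gamma$, not of all of $D$. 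Your appeal to ``the local theory preventing folding or wrapping'' is not an argument, and without the area bound the hypothesis $\mu_{f_k}(D)\leq C$ of Theorem \ref{thm1} cannot be verified, so the whole reduction collapses in precisely the case that makes the theorem nontrivial.

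The paper's proof splits into two cases. When $\mbox{diam}\,f_k'(D)$ is bounded, your argument is essentially the paper's: Simon's formula gives the area bound, Theorem \ref{thm1} applies, and the normalization $\mbox{diam}\,f_k'(\gamma)=1$ rules out collapse. When $\mbox{diam}\,f_k'(D)\rightarrow\infty$, the paper compactifies the image by a M\"obius inversion $I(y)=\frac{y-y_0}{|y-y_0|^2}$ about a point $y_0$ with $B_\delta(y_0)\cap\Sigma_k'=\emptyset$; the inverted surfaces lie in $B_{1/\delta}(0)$, so Simon's formula now gives the area bound, conformal invariance preserves the Willmore energy bound, and the inverted sequence converges away from finitely many concentration points. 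One then must undo the inversion on the limit, which requires a separate argument near the preimage $f_0''^{-1}(0)$ of the inversion center and near the concentration points, carried out via the Liouville equation, Theorem \ref{solution}, and the maximum principle for the harmonic difference $u_k'-\upsilon_k$. This inversion step and the subsequent transfer back are the key ideas your proposal is missing.
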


Combining Theorem \ref{thm1} with Theorem \ref{thm2}, we can give a bubble tree construction for $W^{2,2}$ conformal branched immersions with uniformly bounded Willmore energies and areas. Moreover, we obtain the following integral identity of Gauss curvature .
\begin{thm}\label{identity}
Suppose that $\{f_{k}\}$ is a sequence of $W^{2,2}$ branched conformal immersions of closed Riemann surface $(\Sigma, g)$ in $\R^{n}$ which have the same branch points and same branching order at each branch point. Assume $f_k(\Sigma)\cap B_{R_{0}}(0)\neq\emptyset$ for some $R_0>0$ and
$$\sup_{k}\{\mu(f_{k})+W(f_{k})\}<+\infty.$$
Then, away from a finite set, $f_{k}$ will converge either to some $\widetilde{f}_{0}\in W^{2,2}_{b,c}((\Sigma,g),\R^{n})$ or to a point. Moreover, we obtain the following Gauss curvature identity:
$$2\pi\chi(\Sigma)+2\pi b=\int_{\Sigma}K_{f_{0}}d\mu_{f_{0}}+\sum_{i}\int_{\C}K_{f^{i}}d\mu_{f^{i}},$$
where $b$ is the sum of the branching order of all branch points, $f_0$ is the generalized limit of $f_{k}$ (c.f. Definition \ref{general} in section $3$) and $f^{1}, \cdots, f^{m}$ are bubbles.
\end{thm}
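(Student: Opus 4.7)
The plan is to combine concentration-compactness on $\Sigma$ with an iterative bubble-tree construction driven by Theorems \ref{thm1} and \ref{thm2}, and to transfer the branched Gauss--Bonnet formula for each $f_k$ to the limit via a careful neck analysis.

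The Gauss equation rewriting (\ref{willmore}) together with $\sup_k W(f_k)<\infty$ yields a uniform bound on $\int_\Sigma|A_{f_k}|^2 d\mu_{f_k}$. The measures $|A_{f_k}|^2 d\mu_{f_k}$ therefore concentrate, up to a subsequence, at a finite set $S=\{p_1,\dots,p_N\}\subset\Sigma$ of points where the local mass exceeds the gap constant $\gamma_n$ of Theorem \ref{thm1}. On $\Sigma\setminus S$, Theorem \ref{thm1} applies in each isothermal chart, so in every disk $f_k$ converges weakly in $W^{2,2}_{\mathrm{loc}}$ either to a branched conformal immersion or to a constant. Using the normalization $f_k(\Sigma)\cap B_{R_0}(0)\neq\emptyset$ and the Kuwert--Li/Rivi\`ere framework for choosing M\"obius transformations, one assembles these local limits into a generalized limit $f_0\in W^{2,2}_{b,c}((\Sigma,g),\R^n)$, possibly a point on some components.

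Next, at every $p_j\in S$ and at every region where $f_k$ otherwise collapses, we apply Theorem \ref{thm2} to extract a nontrivial bubble. Concretely, choose a small isothermal disk $D$ around such a point and a curve $\gamma\subset D$ so that $\lambda_k=\diam f_k(\gamma)\to 0$; then $f_k'=\lambda_k^{-1}(f_k-f_k(\gamma(0)))$ has a subsequence converging weakly in $W^{2,2}_{\mathrm{loc}}(D)$ to a nontrivial $W^{2,2}$ branched conformal immersion $f^i:\C\to\R^n$, which by the removable singularity result (Theorem \ref{remove}) extends across $\infty$ to an $S^2$-immersion. Iterating inside each $f^i$ and at smaller scales around the $p_j$ produces a finite tree of bubbles, finiteness following from the fact that every genuine bubble absorbs at least a definite portion $\gamma_n$ of the uniformly bounded $L^2$-curvature energy.

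For each $k$, the branched Gauss--Bonnet formula of Eschenburg--Tribuzy \cite{Eschenburg-Tribuzy} gives
$$\int_\Sigma K_{f_k}d\mu_{f_k}=2\pi\chi(\Sigma)+2\pi b,$$
which is the left-hand side of the asserted identity. On compact subsets of $\Sigma\setminus S$ and of each bubble domain, the $W^{2,2}_{\mathrm{loc}}$ convergence supplied by Theorems \ref{thm1}--\ref{thm2} yields convergence of the Gauss curvature integrals to those of $f_0$ and of each $f^i$. The remaining task is to prove that the integral $\int_{A_k}K_{f_k}d\mu_{f_k}$ over each annular neck region $A_k$ separating the generalized limit from a bubble (or two successive bubbles) tends to zero. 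In isothermal coordinates with $g_{f_k}=e^{2u_k}g_{\mathrm{euc}}$ and $u_k=m\log|z|+\omega_k$, the Liouville identity $K_{f_k}e^{2u_k}=-\Delta u_k$ and the divergence theorem reduce this to a boundary flux $-\oint_{\partial A_k}\partial_\nu u_k\,ds$; the uniform $L^\infty\cap W^{1,2}$ bounds on $\omega_k$ from Theorems \ref{thm1}, \ref{thm2}, together with the compensated-compactness estimates on $K_{f_k}e^{2u_k}$ from H\'elein and M\"uller--\v{S}ver\'ak, force the fluxes on the two boundary components to match the prescribed behavior of $f_0$ at its inner boundary and of $f^i$ at $\infty$, with vanishing contribution from the cylindrical part of the neck.

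The main obstacle is precisely this neck analysis. Unlike the area identity of \cite{Chen-Li}, where a Hausdorff-limit argument is sufficient, the Gauss curvature is a higher-order quantity governed by a Liouville-type equation modulo the branch singularity $m\log|z|$; suppressing its boundary flux on necks of diverging conformal modulus requires the precise Wente-type bounds on $\omega_k$, which in turn depend essentially on the subcritical threshold $\gamma_n$ of Theorem \ref{thm1} and on the careful rescaling prescribed in Theorem \ref{thm2}. Without these ingredients one could not rule out ``hidden'' Gauss curvature escaping along the necks.
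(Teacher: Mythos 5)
Your skeleton matches the paper's: the branched Gauss--Bonnet formula (Theorem \ref{gauss}) for each $f_k$, local $W^{2,2}$ convergence away from a finite concentration set via Theorem \ref{thm1}, bubble extraction via the diameter rescaling of Theorem \ref{thm2}, and a reduction of the curvature defect on necks to boundary fluxes of $u_k$. You also correctly single out the neck analysis as the crux. But at exactly that point your argument stops being a proof: you assert that the uniform bounds on $\omega_k$ ``force the fluxes on the two boundary components to match,'' with ``vanishing contribution from the cylindrical part,'' which is precisely the statement to be proved. Writing the neck as a long cylinder and setting $\Phi_k(t)=\int_{S^1\times\{t\}}\partial_t u_k$, the flux limits at the two ends are $2m_1\pi$ and $2m_2\pi$ for integers $m_1,m_2$ (Theorem \ref{infty} and Remark \ref{rmk}), and the curvature carried by the neck is $2(m_1-m_2)\pi$; nothing you have said rules out $m_1\neq m_2$, i.e.\ an integer multiple of $2\pi$ of Gauss curvature escaping into the neck. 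The compensated-compactness and Wente-type estimates are scale invariant and cannot by themselves exclude this.

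The missing idea is the paper's Lemma \ref{cb1}: one cuts the cylinder $S^1\times[0,T_k]$ at every time $t_k^i$ where $\Phi_k(t_k^i)=2m_i\pi+\pi$, an odd multiple of $\pi$. At each such time the rescaled limit furnished by Theorem \ref{thm2} is a nontrivial complete surface whose two ends have integer flux, so each half carries $\lvert\int K\rvert\ge\pi$, whence the whole bubble has $\int|A|^2\ge 4\pi$; this bounds the number of cuts by $\Lambda/4\pi$. On each remaining segment $\Phi_k$ is continuous in $t$ and never crosses an odd multiple of $\pi$, so by the intermediate value theorem the integer limits at its two ends coincide and the segment's curvature integral tends to zero. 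Note that these cut-times produce bubbles that need not be $|A|^2$-concentration points above the threshold $\gamma_n$: for $n=3$ a piece with $4\pi\le\int|A|^2<8\pi$ is invisible to your concentration set $S$, so a bubble inventory built only from $|A|^2$-concentration at iterated scales, as you propose, can miss curvature-carrying bubbles sitting inside the neck. The flux-crossing criterion is what locates them. Once Lemma \ref{cb1} and its concentration counterpart are in place, the disk-level identity (Theorem \ref{disk}) follows and the theorem is the short consequence given in the paper; without that lemma your neck step is an assertion, not an argument.
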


Next, we consider the convergence behavior of a sequence of branched conformal immersions of $\Sigma_k$ with varied complex structures and varied branch points. Let $\{f_{k}\}$ be a sequence of $W^{2,2}$ branched conformal immersions from $\Sigma_{k}$ with branch points $\{x_{k,1},\cdots,x_{k,m}\}$. If $\{f_{k}\}$ has the same branching order at $x_{k,j}$ for $j=1, \cdots, m$ and $k\in \mathbb{N}^{+}$, and
$$ (\Sigma_{k},x_{k,1},\cdots, x_{k,m})\rightarrow (\Sigma_{0},x_{0,1},\cdots, x_{0,m})\s\s \mbox{in}\s \overline{\mathcal{M}}_{g,m},$$
then we have the following theorem.

\begin{thm}\label{identity2}
With the notations as above, assume $f_k(\Sigma)\cap B_{R_{0}}(0)\neq\emptyset$ and
$$\sup_{k}\{\mu(f_{k})+W(f_{k})\}<+\infty.$$
Then we have
$$2\pi\chi(\Sigma)+2\pi b=\sum_{i}^{s}\int_{{\Sigma_{0}^{i}}}K_{f_{0}^{i}}d\mu_{f_{0}^{i}}+\sum_{j}\int_{\C}K_{f^{j}}d\mu_{f^{j}},$$
where $\Sigma_{0}^{i}$ is the component of $\Sigma_{0}$ minus nodal points, $b$ is the sum of the branching order of the corresponding branch points, $f_{0}^{i}$ is the generalized limit of $f_k\comp \psi_k$ on $\Sigma_{0}^{i}$  where $\psi_k$ is a map from $\Sigma_{0}^{i}$ to $(\Sigma,h_k)$,  and $f^{1}, \cdots, f^{m}$ are bubbles.
\end{thm}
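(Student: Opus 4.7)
The strategy is to reduce Theorem \ref{identity2} to Theorem \ref{identity} applied to each smooth component of the nodal degeneration $\Sigma_0$, while carefully controlling the Gauss curvature supported on the collapsing hyperbolic collars associated with the nodes. Write the normalization of $\Sigma_0$ as a disjoint union of smooth closed Riemann surfaces $\Sigma_0^1,\ldots,\Sigma_0^s$, glued along $\nu$ pairs of nodal points $\{n_\ell^+,n_\ell^-\}$. Deligne--Mumford compactness gives, for every $\delta>0$, diffeomorphisms $\psi_k=\psi_k^\delta$ from the $\delta$-thick part of $\Sigma_0$ onto the corresponding thick part of $\Sigma_k$ such that $\psi_k^{\ast}h_k$ converges smoothly on every compact subset of $\bigsqcup_i\Sigma_0^i\setminus\{n_\ell^\pm\}$, while each thin piece is conformally close to a long hyperbolic collar $C_k^\ell\cong[-T_k^\ell,T_k^\ell]\times S^1$ with $T_k^\ell\to\infty$.

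On each component $\Sigma_0^i$ the pulled back family $F_k^i:=f_k\circ\psi_k$ is a sequence of $W^{2,2}$ branched conformal immersions of $\Sigma_0^i\setminus\{n_\ell^\pm\}$ with branch points at the subset of $\{x_{0,j}\}$ lying on $\Sigma_0^i$, and with uniformly bounded area and Willmore energy. Theorem \ref{identity} applied to the closed surface $\Sigma_0^i$, with the nodal points $n_\ell^\pm$ on $\Sigma_0^i$ treated as additional a priori singularities that are removed in the limit by Theorem \ref{remove}, yields a further subsequence whose generalized limit $f_0^i\in W^{2,2}_{b,c}(\Sigma_0^i,\R^n)$ together with a finite collection of bubbles $f^{i,j}:\C\to\R^n$ captures the behavior of $F_k^i$. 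Some of these bubbles may be anchored at the $n_\ell^\pm$.

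The main obstacle is to control the Gauss curvature on each collar $C_k^\ell$, where the conformal structure itself is degenerating. One partitions $C_k^\ell$ into finitely many sub-annuli of uniformly bounded conformal modulus and classifies each by its local curvature energy. On a sub-annulus with $\int|A_{f_k}|^2<\varepsilon_0$ the $\varepsilon$-regularity hidden in Theorem \ref{thm1} forces $K_{f_k}\,d\mu_{f_k}\to 0$ in the limit. On a sub-annulus where curvature energy concentrates, the appropriate rescaling $(f_k-f_k(p_k))/\lambda_k$ with $\lambda_k=\diam f_k(\gamma_k)$ for a suitable curve $\gamma_k$ produces, by Theorem \ref{thm2}, a bubble $f^j:\C\to\R^n$ whose Gauss curvature integral captures the concentrated part. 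The procedure iterates and terminates in finitely many steps, since each bubble consumes at least $\gamma_n$ units of $\int|A|^2$. The hardest step is to rule out Gauss curvature that persists on the residual pieces without being captured by any rescaling; this uses the compensated-compactness structure of $K_{f_k}e^{2u_k}$ together with the uniform $L^\infty$-bound on the conformal factor $\omega_k'$ guaranteed by Theorem \ref{thm2}.

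To conclude, the branched Gauss--Bonnet formula on the smooth closed surface $\Sigma$ gives
\begin{equation*}
\int_{\Sigma}K_{f_k}\,d\mu_{f_k}=2\pi\chi(\Sigma)+2\pi b
\end{equation*}
for every $k$. Decomposing $\Sigma$ via $\psi_k$ into the thick pieces $\psi_k(\Sigma_0^i\setminus\bigcup_\ell B_\delta(n_\ell^\pm))$, the bubble neighborhoods produced in both previous steps, and the residual collar pieces, and letting first $k\to\infty$ and then $\delta\to 0$, the three groups of integrals converge respectively to $\sum_i\int_{\Sigma_0^i}K_{f_0^i}\,d\mu_{f_0^i}$, to $\sum_j\int_\C K_{f^j}\,d\mu_{f^j}$, and to $0$. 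The asserted identity follows.
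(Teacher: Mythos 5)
Your overall skeleton (thick/thin decomposition of the degenerating surfaces, Theorem \ref{identity}-type analysis on each component $\Sigma_0^i$, Gauss--Bonnet on $\Sigma$, then $k\to\infty$ followed by $\delta\to 0$) matches the paper's hyperbolic case. But your treatment of the collar regions contains a genuine gap, and it is exactly the step the paper spends most of Section 4 on. First, a collar $C_k^\ell\cong S^1\times[-T_k^\ell,T_k^\ell]$ with $T_k^\ell\to\infty$ cannot be partitioned into finitely many sub-annuli of \emph{uniformly bounded} conformal modulus; after removing the finitely many bubble regions you are still left with neck annuli whose modulus diverges. Second, and more seriously, on such a neck the inequality $\bigl|\int K_{f_k}\,d\mu_{f_k}\bigr|\le\frac{1}{2}\int|A_{f_k}|^2\,d\mu_{f_k}$ only bounds the Gauss curvature integral by the curvature energy, which is below the concentration threshold but need not tend to zero; there is no energy identity for $\int|A|^2$ on necks here, so ``$\varepsilon$-regularity forces $K_{f_k}\,d\mu_{f_k}\to 0$'' does not follow. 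Your closing appeal to compensated compactness and the $L^\infty$-bound on $\omega_k'$ names the right tools but not a mechanism.

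The paper's actual mechanism is a flux quantization argument. On a cylinder, $\int_{S^1\times[r,s]}K_{f_k}\,d\mu_{f_k}=\int_{S^1\times\{s\}}\frac{\partial u_k}{\partial t}-\int_{S^1\times\{r\}}\frac{\partial u_k}{\partial t}$, and the flux $t\mapsto\int_{S^1\times\{t\}}\frac{\partial u_k}{\partial t}$ is continuous in $t$ uniformly in $k$. By Theorem \ref{infty} and Remark \ref{rmk}, at the two ends of each bubble (or of $f^{\pm}$) the flux converges to an integer multiple of $2\pi$. Lemma \ref{cb1} chooses the division points $t_k^i$ precisely so that on the intermediate necks the flux never takes a value of the form $2m\pi+\pi$; if the integer fluxes at the two ends of a neck differed, the intermediate value theorem would produce such a forbidden value, and a rescaling at that slice would yield an extra bubble costing at least $4\pi$ of curvature energy, contradicting the induction on $\Lambda$. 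Hence the end fluxes agree and the neck curvature integral vanishes in the double limit. Without this (or an equivalent) argument your proof does not close. A secondary omission: the collar machinery from $\overline{\mathcal{M}}_{g,m}$ requires $2g+m\ge 3$, so the sphere and the unmarked torus must be treated separately, as the paper does by M\"obius normalization and by lifting to $S^1\times\R$ respectively.
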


\begin{rem}
By the theory of  muduli space of Riemannian surface with marked points (for example, see proposition 5.1 on page 71 in \cite{Hummel} ), for each sufficiently large $k$, there exists a diffeomorphism $\psi_{k}:\Sigma_{0}\backslash\mathcal{N}\rightarrow\Sigma_{k}\backslash\Gamma_{k}$, where $\mathcal{N}$ denotes the nadal points on $\Sigma_0$, $\Gamma_{k}$ is a set constituting pairwise disjoint, simple closed curves in $(\Sigma,h_k)$. When we consider the convergence behavior of the integral of Gauss curvature of a sequence of branched conformal immersions of $(\Sigma,h_k)$ with varied complex structures and varied branch points, we firstly consider the convergence behavior of $f_k\comp \psi_k$ in $\Sigma_{0}\backslash\mathcal{N}$. Near the nodal point, we use Collar lemma (c.f. proposition 5.1 on page 71 in \cite{Hummel}) to study the convergence behavior $f_k$. We refer the reader to section 5 to see the detailed proof of theorem 1.7.
\end{rem}

The paper is organized as follows. In Section \ref{preliminary}, we recall some preliminary facts on $W^{2,2}$ conformal branched immersion. In Section \ref{singular}, we give the proof of Theorem \ref{thm2}. In Section \ref{bubble}, we utilize Theorem \ref{thm1} and \ref{thm2}, to give a bubble tree construction and prove Theorem \ref{identity}. In Section 5, we provide the proof of Theorem \ref{identity2}.

\section{Preliminary}\label{preliminary}
In this section, we will recall some previous works which will be used in our paper.

Given a map $f\in W^{2,2}_{b,c}((\Sigma,g),\mathbb{R}^{n})$. Then, for any $p\in\Sigma$, by Theorem \ref{remove} we may choose a isothermal coordinate system around $p$ such that the induced metric $g_{ij}=\langle\partial_{i}f,\partial_{j}f\rangle$ can be given by
$$g_{ij}=e^{2u}g_{ij}=|z|^{2m}e^{2\omega}\delta_{ij}\s\s\mbox{where}\s \omega\in C^{0}\cap W^{1,2}_{loc}(U),$$
where $m$ is the branching order of $p$ ($p$ is unbranched iff $m=0$). Moreover, $u$ satisfies the following weak Liouville type equation
$$-\Delta u\ =K_{f}\me^{2u}-2m\pi\delta_{p}.$$
By assumption $K_{f}\me^{2u}$ belongs to $L^{1}$, it seems that the classical elliptic equation theory can not be used here. In \cite{Muller- sverak} M\"{u}ller and \^{S}ver\'{a}k discovered that this term can be written as a sum of Jacobi determinants, thus they found that the function $K_{g}e^{2u}$ actually belongs to the Hardy space $\mathcal{H}^{1}$ by \cite{Coifman}.  Kuwert and Li in \cite{Kuwert-Li} summed up the above as the following result.
\begin{thm}[Corollary 2.1 in \cite{Kuwert-Li}]\label{solution}
For $f\in W^{2,2}_{conf}(D,\ \mathbb{R}^{n})$ with the induced metric $g_{ij}=e^{2u}\delta_{ij},$ assume
$$\int_{D}|A_{f}|^{2}d\mu_{f}\leq\gamma<\gamma_{n}=\left\{
\begin{aligned}
8\pi\s\s\mbox{ if } n=3,\\
4\pi\s\s\mbox{ if } n\geq4.
\end{aligned}
\right.$$
Then there exists a function $\upsilon:\mathbb{ C }\rightarrow\mathbb{ R }$ solving the following equation
$$-\Delta \upsilon=K_{g}e^{2u}\s\s \mbox{in}\s D,$$
and satisfying the estimates
$$\|\upsilon\|_{L^{\infty}(\C)}+\|D\upsilon\|_{L^{2}(\C)}\leq C(\gamma)\int_{D}|A_f|^{2}d\mu_{f}.$$
\end{thm}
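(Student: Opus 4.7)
The proof combines H\'elein's Coulomb moving frame with the compensated-compactness theorem of Coifman-Lions-Meyer-Semmes (CLMS) and the $L^\infty$+$\dot W^{1,2}$ bound for the Laplace equation with Hardy-space right-hand side. Because $\int_D|A_f|^2 d\mu_f < \gamma_n$, I first invoke H\'elein's moving frame lemma to pick an orthonormal Coulomb gauge $(E_1,E_2)$ of the pulled-back tangent bundle over $D$, i.e., an orthonormal tangent frame along $f$ satisfying $d^*\lan dE_1, E_2\ran = 0$, with the quantitative estimate
\[ \|\nabla E_1\|_{L^2(D)}^2 + \|\nabla E_2\|_{L^2(D)}^2 \;\leq\; C\int_D|A_f|^2 d\mu_f. \]
Using the Coulomb condition together with the Gauss equation and the structure equations of the immersion, a direct computation then yields the Jacobian identity
\[ K_f \, e^{2u} \;=\; \sum_{i=1}^n \bigl(\partial_1 E_1^i\,\partial_2 E_2^i - \partial_2 E_1^i\,\partial_1 E_2^i\bigr) \qquad \text{on } D, \]
so that the Gauss-curvature density is a finite sum of scalar Jacobians of $W^{1,2}$ functions.

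Next I extend each component $E_\alpha^i$ from $D$ to a compactly supported $\widetilde E_\alpha^i \in W^{1,2}(\C)$ via a standard extension-and-cutoff, preserving the Dirichlet norms up to a constant. Set
\[ h \;:=\; \sum_{i=1}^n \bigl(\partial_1 \widetilde E_1^i\,\partial_2 \widetilde E_2^i - \partial_2 \widetilde E_1^i\,\partial_1 \widetilde E_2^i\bigr), \]
which coincides with $K_f e^{2u}$ on $D$ by construction. By the CLMS theorem, each Jacobian lies in the real Hardy space $\mathcal{H}^1(\C)$ with norm controlled by the product of the $L^2$-norms of the gradients of its two arguments, hence
\[ \|h\|_{\mathcal{H}^1(\C)} \;\leq\; C(\gamma)\int_D|A_f|^2 d\mu_f. \]
I then define $\upsilon := -\tfrac{1}{2\pi}\log|\cdot|\ast h$, so that $-\Delta\upsilon = h$ on $\C$ and, in particular, $-\Delta\upsilon = K_f e^{2u}$ in $D$. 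The $\mathcal{H}^1$-$\mathrm{BMO}$ duality combined with the boundedness of the Riesz transforms (or equivalently Wente's inequality in its Hardy-space formulation) yields
\[ \|\upsilon\|_{L^\infty(\C)} + \|\nabla\upsilon\|_{L^2(\C)} \;\leq\; C\|h\|_{\mathcal{H}^1(\C)}, \]
which together with the previous display produces the claimed bound.

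The main obstacle is the Coulomb gauge of the first step. The naive coordinate frame $e^{-u}\partial_\alpha f$ is always available but need not be Coulomb; and yet the Jacobian structure of $K_f e^{2u}$ depends crucially on the Coulomb condition, which turns the connection $1$-form into a divergence-free quantity whose exterior derivative is a sum of exact Jacobians. H\'elein's existence result for such a gauge with the quantitative Dirichlet estimate is valid precisely when $\gamma < \gamma_n$ (with $\gamma_n = 8\pi$ for $n=3$ and $\gamma_n = 4\pi$ for $n\geq 4$), which is exactly why this threshold appears in the hypothesis. A secondary technical point is that the extension step must preserve the Jacobian structure inside $D$; since we need $h = K_f e^{2u}$ only on $D$ (not globally), this follows automatically from $\widetilde E_\alpha^i|_D = E_\alpha^i$. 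The remaining harmonic-analytic ingredients (CLMS and the logarithmic Newton-potential estimate) are by now classical.
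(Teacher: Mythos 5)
Your skeleton --- Jacobian structure for the curvature density, extension to the plane, the Coifman--Lions--Meyer--Semmes estimate, the Newton potential, and the $W^{2,1}(\R^{2})\hookrightarrow L^{\infty}$ endpoint estimates --- is the standard route, and it is the one this paper indicates (the paper gives no proof of Theorem \ref{solution}: it is quoted from \cite{Kuwert-Li}, with the preceding text attributing the Jacobian/Hardy-space structure to \cite{Muller- sverak} and \cite{Coifman}). The genuine gap is your first step. H\'elein's Coulomb moving-frame lemma, which produces an orthonormal tangent frame $(E_{1},E_{2})$ with $\|\nabla E_{1}\|_{L^{2}(D)}^{2}+\|\nabla E_{2}\|_{L^{2}(D)}^{2}\le C\int_{D}|A_{f}|^{2}d\mu_{f}$, is valid only under the threshold $\int_{D}|A_{f}|^{2}d\mu_{f}\le\frac{8\pi}{3}$; that is exactly why H\'elein's convergence theorem (Theorem \ref{helein}) carries the constant $\frac{8\pi}{3}$, as the introduction of this paper states explicitly. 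Your claim that the gauge exists with the quantitative estimate ``precisely when $\gamma<\gamma_{n}$'' is false, and it is not a citable result: passing from $\frac{8\pi}{3}$ to the optimal $\gamma_{n}$ is the content of \cite{Kuwert-Li}, and it is achieved there not by a Coulomb gauge but by M\"uller--\v{S}ver\'ak's analysis of the Gauss map $G$ into the Grassmannian (the quadric $Q_{n-2}\subset\mathbb{CP}^{n-1}$, i.e. $S^{2}$ when $n=3$): the Dirichlet energy of $G$ is conformally invariant and equals, up to a dimensional constant, $\int_{D}|A_{f}|^{2}d\mu_{f}$ with no circularity, the curvature density is a sum of Jacobians manufactured from $G$, and the constants $8\pi$ ($n=3$) and $4\pi$ ($n\ge4$) enter through degree/lifting considerations for finite-energy maps into that target. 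As written, your argument proves the theorem only for $\gamma\le\frac{8\pi}{3}$, and since $\frac{8\pi}{3}<4\pi\le\gamma_{n}$, the range $\frac{8\pi}{3}<\gamma<\gamma_{n}$ --- the regime the sharp statement is about --- remains unproved.

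A conceptual error compounds the gap: the Jacobian structure does not ``depend crucially on the Coulomb condition.'' For \emph{every} $W^{1,2}$ orthonormal tangent frame, Coulomb or not --- in particular for the coordinate frame $E_{\alpha}=e^{-u}\partial_{\alpha}f$ --- one has the gauge-invariant identity $K_{f}e^{2u}\,dx^{1}\wedge dx^{2}=\sum_{i}dE_{1}^{i}\wedge dE_{2}^{i}$, i.e. exactly your displayed formula. What a well-chosen frame buys is not this algebraic identity but the energy bound: for the coordinate frame one computes $\int_{D}(|\nabla E_{1}|^{2}+|\nabla E_{2}|^{2})\,dx=2\int_{D}|\nabla u|^{2}\,dx+\int_{D}|A_{f}|^{2}\,d\mu_{f}$, which involves the unknown conformal factor and is therefore useless here. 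So your ``main obstacle'' paragraph locates the difficulty in the wrong place and explains the threshold by the wrong mechanism. A minor further point: extending $E_{\alpha}^{i}$ itself only yields $\|h\|_{\mathcal{H}^{1}(\C)}\le C\left(1+\int_{D}|A_{f}|^{2}\,d\mu_{f}\right)$, since the extension operator controls $\|\nabla\widetilde{E}_{\alpha}^{i}\|_{L^{2}(\C)}$ by the full $W^{1,2}(D)$-norm rather than the Dirichlet seminorm; to get the stated homogeneous bound, extend $E_{\alpha}^{i}$ minus its mean value over $D$ (which changes no Jacobian) and invoke the Poincar\'e inequality.
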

\begin{rem}\label{userem}
With the same condition as above, since $u-v$ is a harmonic function, it is easy to see that
$$\|u\|_{L^{\infty}(D_r)}+\|D u\|_{L ^{2}(D_r)}\leq C(\gamma,r)(\gamma+\|u\|_{L^{1}(D)}).$$
\end{rem}
Let $\Omega^{*}_{r}=\{z\in \mathbb{C}:|z|>r\}$. A metric on $\Omega^{*}_{r}$ is complete at $\infty$ if there exists some $z_{0}\in\Omega^{*}_{r}$ such that $dist_{g}(z_{0},z)\rightarrow\infty$ when $z\rightarrow\infty$.  M\"{u}ller and \^{S}ver\'{a}k \cite{Muller- sverak} studied this type surface and obtained the following theorem:
\begin{thm}[\cite{Muller- sverak}, Theorem 4.2.1]\label{infty}
Suppose that $f\in W^{2,2}_{conf,loc}(\mathbb{C}\backslash D_{R},\mathbb{R}^{n})$ with
$$\int_{\mathbb{C}\backslash D_{R}}|A_{f}|^{2}d\mu_{f}<\infty,$$
where $g_{ij}=e^{2u}\delta_{ij}$ is the induced metric. If $f(\mathbb{C}\backslash D_{R})$ is complete, then
$$ u(z)=m\log|z|+\omega(z),$$
where $m\in \mathbb{N}_{0}$, $\omega \in C^{0}\cap W^{1,2}(\mathbb{C}\backslash D_{R},\, \mathbb{R}^{n})$.
Moreover
$$\lim_{t\rightarrow \infty}\int_{\partial D_{t}}\frac{\partial u}{\partial r}=2m\pi.$$
\end{thm}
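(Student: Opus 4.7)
My plan is to invert the end to a puncture. Setting $w=1/z$ sends $\mathbb{C}\setminus\bar D_R$ conformally to $D_{1/R}\setminus\{0\}$; the induced metric becomes $e^{2\tilde u(w)}|dw|^2$ with $\tilde u(w)=u(1/w)-2\log|w|$, the Liouville equation $-\Delta\tilde u = K_f e^{2\tilde u}$ is preserved, the curvature integral remains finite, and completeness of $f(\mathbb{C}\setminus D_R)$ at $\infty$ transfers to completeness of the new metric at $w=0$. It therefore suffices to prove an expansion $\tilde u(w) = c\log|w|+\tilde\omega(w)$ near $w=0$ with $\tilde\omega\in C^0\cap W^{1,2}$, and then untwist.

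\textbf{Small-energy Hardy splitting.} Because $\int|A_f|^2 e^{2\tilde u}\,dw<\infty$, I shrink $\delta\in(0,1/R]$ so that $\int_{D_\delta}|A_f|^2 e^{2\tilde u}\,dw<\varepsilon_0$, with $\varepsilon_0$ below the threshold $\gamma_n$ of Theorem \ref{solution}. That theorem (using that $K_f e^{2\tilde u}$ is a sum of Jacobian determinants, hence lies in the Hardy space $\mathcal{H}^1$, after M\"{u}ller--\v{S}ver\'{a}k) yields $v\in L^\infty\cap W^{1,2}$ with $-\Delta v = K_f e^{2\tilde u}$ in $D_\delta$ and $\|v\|_{L^\infty}+\|Dv\|_{L^2}\le C\varepsilon_0$. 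Then $h:=\tilde u - v$ is harmonic on $D_\delta\setminus\{0\}$ and admits the Fourier decomposition
\[
h(r,\theta) = c\log r + a_0 + \sum_{j\ge 1}\bigl[(a_j r^j + b_j r^{-j})\cos j\theta + (c_j r^j + d_j r^{-j})\sin j\theta\bigr].
\]

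\textbf{Killing the Laurent modes (the main obstacle).} The heart of the proof is ruling out every singular mode $r^{-j}\cos j\theta$, $r^{-j}\sin j\theta$, $j\ge 1$. Suppose $b_{j_0}\ne 0$ for the smallest such $j_0$; then in any open sector $I\subset(0,2\pi)$ where $b_{j_0}\cos(j_0\theta)<0$, one has $h(r,\theta)\to -\infty$ at super-logarithmic rate $r^{-j_0}$ as $r\to 0$, and since $v$ is bounded we obtain $\tilde u(r,\theta)\le -c'\,r^{-j_0}+C$ for small $r$. Consequently
\[
\int_0^{\delta/2}e^{\tilde u(r,\theta)}\,dr<\infty\qquad(\theta\in I),
\]
so the radial sequence $w_n=(1/n)e^{i\theta}$, $\theta\in I$, is Cauchy in the induced metric but has no limit in $D_\delta\setminus\{0\}$, contradicting completeness at $0$. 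Hence all $b_j,d_j$ vanish, $h(w)=c\log|w|+\hat h(w)$ with $\hat h$ harmonic across $0$ and bounded on $D_{\delta/2}$; setting $\tilde\omega := v+\hat h$ produces the desired expansion for $\tilde u$.

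\textbf{Return, integrality, and the flux identity.} Undoing the inversion yields $u(z) = -(c+2)\log|z|+\tilde\omega(1/z)$ on $\{|z|>2/\delta\}$; set $m := -(c+2)$, so $u=m\log|z|+\omega$ with the claimed regularity. Integrality $c\in\mathbb{Z}$ follows because the asymptotic model $|w|^{2c}|dw|^2$ near $0$ is the pullback of the flat metric under $\zeta=w^{c+1}/(c+1)$ (or $\log w$ when $c=-1$), which is single-valued on the punctured disk only when $c\in\mathbb{Z}\setminus\{-1\}$; completeness of the end then rules out $c\ge 0$ (those values give $\int_0^\delta e^{\tilde u}\,dr<\infty$), leaving $c\le -2$, i.e., $m\in\mathbb{N}_0$. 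Finally, integrating $-\Delta u = K_f e^{2u}$ over the annulus $\{t<|z|<T\}$ and applying the divergence theorem, together with $\int_{\partial D_T}\partial_r u\to 2\pi m$ as $T\to\infty$ (from the expansion) and $\int_{\{|z|>T\}}K_f e^{2u}\to 0$ (from finite total curvature), gives
\[
\int_{\partial D_t}\frac{\partial u}{\partial r} = 2\pi m + \int_{\{|z|>t\}}K_f e^{2u}\,dx \longrightarrow 2\pi m \qquad (t\to\infty),
\]
which is the stated flux identity.
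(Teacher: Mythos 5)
The paper itself does not prove this statement: Theorem \ref{infty} is quoted verbatim from M\"uller--\v{S}ver\'ak (their Theorem 4.2.1), so your attempt has to be measured against the original argument. Your skeleton --- invert the end to a puncture, solve $-\Delta v=K_fe^{2\tilde u}$ with the Hardy-space estimate, split off the harmonic remainder $h=\tilde u-v$, and play its Laurent expansion against completeness --- is indeed the correct general strategy. But the two steps you treat as routine are exactly where the real content of the theorem lies, and as written both fail.

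First, the mode-killing step. Even when only finitely many singular coefficients are nonzero, the asymptotics of $h$ near $0$ are governed by the \emph{highest}-order singular mode present, not by the smallest index $j_0$: for $h=\mathrm{Re}(w^{-1}+10\,w^{-2})$ the $w^{-2}$ term dominates and is positive on parts of every sector where $\cos\theta<0$, so your bound $\tilde u\le -c'r^{-j_0}+C$ on a fixed sector is simply false. Much more seriously, nothing in your argument excludes an essential singularity of the harmonic part (infinitely many nonzero $b_j,d_j$); in that case no single mode dominates and there need be no sector at all on which $h\to-\infty$. The incompleteness can be invisible to sector arguments: for $h=\mathrm{Re}(e^{1/w})$ the metric $e^{2h}|dw|^2$ is incomplete at $0$ because $h$ stays bounded along the single ray $w=it$ (there $|e^{1/w}|=1$), while $h$ oscillates wildly and is unbounded above and below in every sector. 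Ruling out essential singularities is precisely the hard part of the theorem --- it is an Osserman/Huber-type lemma (a holomorphic $\phi$ on the punctured disk such that every path to $0$ has infinite length in $|\phi|\,|dw|$ has at most a pole at $0$), and your proof is missing it entirely. Second, the integrality step. The assertion that $c\in\mathbb{Z}$ because $\zeta=w^{c+1}/(c+1)$ ``must be single-valued'' is unjustified: the cone metric $|w|^{2c}|dw|^2$ is a perfectly well-defined, single-valued, complete (for every real $c\le -1$) flat conformal metric; only its \emph{developing map} is multivalued, and nothing you established forces the developing map to be single-valued. So completeness plus harmonicity admit every real $c\le-1$, including the cylindrical value $c=-1$. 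Integrality of $m$, and the exclusion of the cylindrical end, can only come from the fact that the metric is induced by a $W^{2,2}$ immersion into $\mathbb{R}^n$ with $\int|A_f|^2<\infty$ (existence of a limiting tangent plane and an integer multiplicity $m+1$ of the end, via the Gauss map) --- an input your proof never uses after writing down the Liouville equation, so it cannot possibly produce $m\in\mathbb{N}_0$. A smaller point in the same direction: Theorem \ref{solution} is stated for immersions of the full disk, so to invoke it on $D_\delta\setminus\{0\}$ you need the standard capacity-zero remark that lets the Jacobian/Hardy-space structure pass across the puncture, as in the proof of Theorem \ref{remove}.
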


For a conformal immersion from a punctured disk,  Kuwert and Li \cite{Kuwert-Li} proved the following removable singularity result.
\begin{thm}[\cite{Kuwert-Li}]\label{remove}
 Suppose that $f\in W^{2,2}_{loc}(D\backslash\{0\},\ \mathbb{R}^{n})$ satisfies$$\int_{D\backslash\{0\}}|A_{f}|^{2}d\mu_{f}<\infty\s\mbox{and}\s\mu_{f}(D\backslash\{0\})<\infty,$$ where $g_{f}=e^{2u}g_{euc}$ is the induced metric. Then $f\in W^{2,2}(D,\ \mathbb{R}^{n})$ and
$$ u(z)=m\log|z|+\omega(z)\ \ \  \mbox{where} \ m\in \mathbb{N}_{0},\ \  \omega \in C^{0}\cap W^{1,2}(D,\ \mathbb{R}^{n}),$$
$$-\Delta u\ =\ -2m\pi\delta_{0}+K_{g}e^{2u}\ \ \ \mbox{in}\s  D.$$
Moreover, the multiplicity of $f(D_\delta(0))$ as varifold at $f(0)$ is $m+1$.
\end{thm}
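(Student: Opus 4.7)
The plan is to reduce Theorem \ref{remove} to the end-at-infinity result Theorem \ref{infty} via a double inversion---one in the domain and one in the target $\R^{n}$.

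\emph{Step 1 (Continuity at the origin).} Using the finite area assumption together with $\int_{D\setminus\{0\}}|A_f|^{2}d\mu_f<\infty$ (which, by the Gauss equation, also controls the Willmore energy on $D$), Simon's monotonicity formula precludes the image $f(z_k)$ from accumulating on more than one point as $z_k\to 0$. Hence $f$ extends continuously to $0$; translating in $\R^{n}$, we may assume $f(0)=0$.

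\emph{Step 2 (Double inversion).} Let $\phi(w)=1/w$ map $\C\setminus\bar D_1$ conformally onto $D\setminus\{0\}$, and let $K(y)=y/|y|^{2}$ be the Kelvin inversion on $\R^{n}$. Define
$$\tilde f(w):=K(f(1/w))=\frac{f(1/w)}{|f(1/w)|^{2}},\qquad |w|>1.$$
Since $\phi$ and $K$ are conformal, $\tilde f$ is a $W^{2,2}_{loc}$ conformal immersion whose conformal factor, after tracking the Jacobian of $K$ at $y=f(1/w)$ (contributing $-2\log|f(1/w)|$) and of $\phi$ (contributing $-2\log|w|$), is
$$\tilde u(w)=u(1/w)-2\log|w|-2\log|f(1/w)|.$$
Because $\int|A|^{2}d\mu$ is conformally invariant for surfaces in $\R^{n}$, we still have $\int_{\C\setminus D_1}|A_{\tilde f}|^{2}d\mu_{\tilde f}<\infty$. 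Moreover $|f(1/w)|\to 0$ as $w\to\infty$ forces $|\tilde f(w)|\to\infty$, and since intrinsic length is bounded below by Euclidean length of the image in $\R^{n}$, the metric $g_{\tilde f}$ is complete at infinity.

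\emph{Step 3 (Apply Theorem \ref{infty} and translate back).} Theorem \ref{infty} now yields $\tilde u(w)=\tilde m\log|w|+\tilde\omega(w)$ with $\tilde m\in\N_{0}$ and $\tilde\omega\in C^{0}\cap W^{1,2}(\C\setminus D_1)$. Substituting into the formula of Step 2 and setting $z=1/w$ gives
$$u(z)=-(\tilde m+2)\log|z|+2\log|f(z)|+\tilde\omega(1/z).$$
A short bootstrap, using that the continuous extension of $f$ at $0$ has branched normal form matching the asymptotics of $\tilde f$ at infinity, shows $|f(z)|\sim c|z|^{\tilde m+1}$. Substituting this back yields
$$u(z)=\tilde m\log|z|+\omega(z),\qquad\omega\in C^{0}\cap W^{1,2}(D),$$
and the branching order is $m:=\tilde m\in\N_{0}$.

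\emph{Step 4 (Equation, regularity, multiplicity).} The distributional Liouville equation $-\Delta u=-2m\pi\delta_{0}+K_{g}e^{2u}$ follows from the classical equation on $D\setminus\{0\}$ together with $-\Delta(m\log|z|)=-2m\pi\delta_{0}$. The $W^{2,2}(D,\R^{n})$ regularity of $f$ and the varifold multiplicity $m+1$ at $f(0)$ are then immediate consequences of the explicit asymptotic form of $u$ combined with $\Delta_{g_{f}}f=\overrightarrow{H_{f}}$. The main obstacle is Step 1: finite area alone does not rule out accumulation of $f(z_{k})$ on a nontrivial continuum, so Simon's monotonicity formula---which combines the Willmore bound with the area bound to force a definite lower density at every image limit point---is essential to single out a unique limit. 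Once continuity is in hand, the inversion reduction to Theorem \ref{infty} is essentially formal.
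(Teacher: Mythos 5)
This statement is quoted by the paper from \cite{Kuwert-Li} and is not proved in the paper at all; the proof in the source works directly on the punctured disc: one uses the Hardy-space estimate (Theorem \ref{solution}) to produce a bounded solution $v$ of $-\Delta v=K_ge^{2u}$, observes that $u-v$ is harmonic on $D\setminus\{0\}$ and, thanks to the finite-area hypothesis, equals $m_0\log|z|$ plus a function harmonic across $0$ with $m_0>-1$, and finally obtains the integrality $m_0=m\in\mathbb{N}_0$ (and the expansion of $f$, hence the multiplicity $m+1$) from a similarity-principle/Hartman--Wintner type argument applied to $\partial_z f$. Your double-inversion reduction to Theorem \ref{infty} is a genuinely different route, and it is a natural heuristic, but as written it has gaps that are not merely technical.

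First, Step 1 is unsupported: Simon's monotonicity formula is available for closed surfaces (or properly immersed ones with controlled boundary terms), and Theorem \ref{remove} carries no extendability hypothesis --- contrast Theorem \ref{thm2}, where condition (2) is imposed precisely so that the monotonicity formula applies. Moreover, in all known proofs the continuity of $f$ at $0$ is a \emph{consequence} of the conformal-factor asymptotics, not an input, so basing the whole reduction on it is close to circular. Second, in Step 2 the assertion that $\int|A|^2d\mu$ is conformally invariant is false: only the trace-free density $|A^{o}|^2d\mu$ is pointwise invariant, while $\int|A|^2d\mu=\int(2|A^{o}|^2+2K)d\mu$ changes under an inversion centered at a point \emph{on} the surface. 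Verifying $\int_{\C\setminus \overline{D_1}}|A_{\tilde f}|^2d\mu_{\tilde f}<\infty$ requires bounding a term of the shape $\int |f-f(0)|^{-4}\,|(f-f(0))^{\perp}|^2\,d\mu_f$ near $f(0)$, which already encodes the local normal form you are trying to prove; in addition, $K(y)=y/|y|^2$ is singular on all of $f^{-1}(f(0))$, and without the normal form you cannot exclude that this set contains points other than $z=0$. Finally, Step 3's ``short bootstrap'' giving $|f(z)|\sim c|z|^{\tilde m+1}$ is exactly item (1) of Remark \ref{rmk}, i.e.\ a corollary of the theorem being proved, and no argument for it is supplied. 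To repair the approach you would essentially have to reprove the Hardy-space/harmonic decomposition on the punctured disc, at which point the inversion is unnecessary.
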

The number $m$ in the above Theorem \ref{remove} is called the branching order at $0$. Obviously, $f$ is unbranched at $0$ if and  only if $m=0$.
\begin{rem}\label{rmk}
It is worthwhile pointing out that if $f$ satisfies the same conditions as Theorem \ref{remove},  then  the following two useful facts hold:
\begin{enumerate}
\item[(1)] $\lim\limits_{z\rightarrow 0}\frac{|f(z)-f(0)|}{|z|^{m+1}}=\frac{e^{\omega(0)}}{m+1}.$
\item[(2)] Integrating $ -\Delta u\ =\ -2m\pi\delta_{0}+K_{g}\me^{2u}$ on $D_t$ , we get
 $$\lim_{t\rightarrow 0}\int_{\partial D_{t}}\frac{\partial u}{\partial r}=2m\pi.$$
\end{enumerate}
\end{rem}
At the end of this section, we recall the Gauss-Bonnet formula for conformal immersion with branch points.
\begin{thm}[\cite{Chen-Li,Eschenburg-Tribuzy}]\label{gauss} 
Let $(\Sigma, g)$ be a closed Riemann surface. Then for any $f\in W^{2,2}_{b,c}((\Sigma, g), \mathbb{R}^{n})$, there holds
$$\int_{\Sigma}K_{f}d\mu_{f}=2\pi\chi(\Sigma)+2\pi b,$$
here $b$ is the sum of the branching order of the branch points.
\end{thm}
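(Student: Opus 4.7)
The plan is to reduce the branched Gauss-Bonnet identity to the classical Gauss-Bonnet theorem by means of the conformal change formula, and to recover the defect $2\pi b$ from the logarithmic singularities of the conformal factor at the branch points via a shrinking-cutoff integration by parts. Fix a smooth metric $g_0$ on $\Sigma$ in the given conformal class, so that classical Gauss-Bonnet gives $\int_\Sigma K_{g_0}\,d\mu_{g_0}=2\pi\chi(\Sigma)$. Write $g_f=e^{2v}g_0$ on $\Sigma\setminus\{p_1,\ldots,p_N\}$, where $p_1,\ldots,p_N$ are the branch points of orders $m_1,\ldots,m_N$ (so $b=\sum_i m_i$); by Theorem \ref{remove}, in any $g_0$-isothermal chart around $p_i$ one has $v(z)=m_i\log|z|+\omega_i(z)$ with $\omega_i\in C^0\cap W^{1,2}$. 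On the regular part the conformal change formula $K_f e^{2v}=K_{g_0}-\Delta_{g_0}v$ yields the measure identity
\begin{equation*}
K_f\,d\mu_f - K_{g_0}\,d\mu_{g_0} = -\Delta_{g_0}v\,d\mu_{g_0}.
\end{equation*}

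Next I would set $\Sigma_\delta=\Sigma\setminus\bigcup_i D_\delta(p_i)$, integrate the above identity over $\Sigma_\delta$, and invoke the divergence theorem, which is legitimate since $\Delta_{g_0}v=K_{g_0}-K_f e^{2v}\in L^1$ on the regular part and $\partial\Sigma_\delta$ is smooth. Noting that the outward conormal of $\Sigma_\delta$ on $\partial D_\delta(p_i)$ is $-\partial_r$ in polar coordinates centred at $p_i$, this produces
\begin{equation*}
\int_{\Sigma_\delta}K_f\,d\mu_f - \int_{\Sigma_\delta}K_{g_0}\,d\mu_{g_0} = \sum_{i=1}^N\int_{\partial D_\delta(p_i)}\frac{\partial v}{\partial r}\,ds.
\end{equation*}
Passing $\delta\to 0$, the left-hand side converges to $\int_\Sigma K_f\,d\mu_f-2\pi\chi(\Sigma)$, using absolute integrability of $K_f\,d\mu_f$ (from $|K_f|\le\tfrac{1}{2}|A_f|^2$ via the Gauss equation and the defining hypothesis $\int_\Sigma|A_f|^2\,d\mu_f<\infty$). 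For the right-hand side, decompose $v=u-\phi_0$ with $\phi_0$ the smooth conformal factor of $g_0$ in the chart; the smooth contribution $\int_{\partial D_\delta}\partial_r\phi_0\,ds$ is $O(\delta)$, while Remark \ref{rmk}(2) applied to $u$ gives $\int_{\partial D_\delta(p_i)}\partial_r u\,ds\to 2\pi m_i$. Summing, the right-hand side tends to $2\pi\sum_i m_i=2\pi b$, and the identity follows.

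The main obstacle is precisely this boundary limit: since $\omega_i$ is only $C^0\cap W^{1,2}$ with no pointwise radial derivative control on shrinking circles, one cannot directly claim $\int_{\partial D_\delta}\partial_r\omega_i\,ds\to 0$. What makes the argument work is Remark \ref{rmk}(2), which itself is obtained by integrating the distributional equation $-\Delta u=K_f e^{2u}-2\pi m_i\delta_{p_i}$ over $D_\delta$ and using $K_f e^{2u}\in L^1$ to let the interior integral vanish as $\delta\to 0$; once this input is granted, the remainder of the argument is routine bookkeeping.
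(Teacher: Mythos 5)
The paper offers no proof of Theorem \ref{gauss}: it is recorded in Section \ref{preliminary} purely as a quoted result from \cite{Chen-Li} and \cite{Eschenburg-Tribuzy}, so there is no internal argument to compare yours against line by line. That said, your proof is correct, and it is essentially the derivation those references give. You isolate exactly the two inputs that matter: the weak Liouville equation $-\Delta u=K_f e^{2u}-2m\pi\delta_p$ coming from Theorem \ref{remove} (equivalently, the distributional conformal change formula on the regular part together with the quantified singularity $v=m_i\log|z|+\omega_i$ with $\omega_i\in C^0\cap W^{1,2}$), and the flux limit $\int_{\partial D_\delta(p_i)}\partial_r u\,ds\to 2\pi m_i$ of Remark \ref{rmk}(2), which is what converts each logarithmic singularity into its contribution to the defect $2\pi b$. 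Your treatment of the analytic technicalities is also sound: $|K_f|\le\tfrac12|A_f|^2$ gives absolute integrability of $K_f\,d\mu_f$, and the boundary trace $\int_{\partial D_\delta(p_i)}\partial_r\omega_i\,ds$ makes sense for a.e.\ $\delta$ by Fubini since $D\omega_i\in L^2$, which is all the shrinking-cutoff argument needs. The only comment worth adding is that the cutoff bookkeeping can be bypassed: the identity $-\Delta_{g_0}v=K_f e^{2v}-K_{g_0}-2\pi\sum_i m_i\,\delta_{p_i}$ holds distributionally on all of the closed surface $\Sigma$, and pairing it with the constant function $1$ yields
\begin{equation*}
0=\int_{\Sigma}K_f\,d\mu_f-\int_{\Sigma}K_{g_0}\,d\mu_{g_0}-2\pi\sum_i m_i
=\int_{\Sigma}K_f\,d\mu_f-2\pi\chi(\Sigma)-2\pi b
\end{equation*}
in one step. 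Either route is complete; I see no gap.
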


\section{Generalized H\'elein's convergence theorem}\label{singular}
In this section, we will employing Theorem \ref{thm1}, Simon's monotonicity formula (c.f. (1.3) in \cite{Simon}) and compensated compactness property of $K_{f_k}e^{2u_k}$ to provide the proof of Theorem \ref {thm2}. We also utilize Theorem \ref{thm1} and \ref{thm2} to give a convergence result for conformal minimal branched immersions at the end of this section.

\subsection{{\bf Proof of Theorem \ref{thm2}}:}
Firsty, note that $f_{k}'(\gamma)\subset B_{1}(0)$. Without loss of generality, we assume $\gamma(0)=0$. Put $$\Sigma_{k}'=\frac{f_{k}(\Sigma)-f_{k}(\gamma(0))}{\lambda_{k}}, $$
then we need to consider the following two cases:

Case $(1)$: $\mbox{diam}f_{k}'(D)< C$. For this case, by inequality $(1.3)$ in \cite{Simon} (for more general varifold case we refer to the appendix A in \cite{Kuwert-Schatzle} ) with $\rho=\infty$ we have that $$\frac{\mu\{\Sigma_{k}'\cap B_{\sigma}(0)\}}{\sigma^{2}}\leq C$$
for any $\sigma>0$. Hence we get $\mu_{f_{k}'}(D)<C$ by taking $\sigma=\mbox{diam}f_{k}'(D)$ . Then, it follows from Theorem \ref{thm1} that $f_{k}'$ converges weakly in $W^{2,2}_{loc}(D)$. Since $\mbox{diam}f_{k}'(\gamma)=1$, the weak limit is not trivial.
\medskip

Case $(2)$: $\mbox{diam}f_{k}'(D)\rightarrow\infty$. As shown in \cite{Kuwert-Li}(page 328), there exist a point $y_{0}\in\mathbb{R}^{n}$ and a constant $\delta>0$, such that
$$ B_{\delta}(y_{0})\cap\Sigma_{k}'=\emptyset,\s\s \mbox{for\s all} \s k.$$

Let $I=\frac{y-y_{0}}{|y-y_{0}|^{2}}$,
 $$f_{k}''=I\comp f_{k}'\s\s\mbox{and}\s\s \Sigma_{k}''=I(\Sigma_{k}').$$
By conformal invariance of Willmore functional, we have
$$\int_{\Sigma_{k}''}|A_{\Sigma_{k}''}|^{2}d\mu_{\Sigma_{k}''}=\int_{\Sigma_{k}}|A_{f_{k}}|^{2}d\mu_{f_{k}}<\Lambda.$$
Since $\Sigma_{k}''\subset B_{\frac{1}{\delta}}(0),$ again by $(1.3)$ in \cite{Simon}, we get $\mu_{f_{k}''}(\Sigma_k)<C $. Let
$$\mathcal{S}(\{f_{k}''\}):\ = \{p\in D:\lim_{t\rightarrow 0}\liminf_{k\rightarrow\infty}\int_{D_{t}(p)}|A_{f_{k}''}|^{2}d\mu_{f_{k}''}\geq\gamma_{n}\}.$$
Then $f_{k}''$ converges weakly in $W^{2,2}_{loc}(D\backslash\mathcal{S}(\{f_{k}''\})).$

Next, we prove that $f_{k}''$ does not converge to a point. If $f_{k}''$ converge to a point in $W^{2,2}_{loc}(D\backslash\mathcal{S}(\{f_{k}''\})$, then the limit must be $0$ which is implied by $\mbox{diam}(f_{k}')\rightarrow\infty$. However, by the definition of $f_{k}''$, we can find a $\delta_{0}$ such that $f_{k}''(\gamma)\cap B_{\delta_{0}}(0)=\emptyset$, thus for any $p\in \gamma([0,1])\backslash\mathcal{S}(\{f_{k}''\}),f_{k}''$ does not converge to a point. A contradiction.

It is remaining for us to prove that $f_{k}'$ converges weakly in $W^{2,2}_{loc}(D,\mathbb{R}^{n})$. Let $f_{0}''$ be the weak limit of $f_{k}''$ which is a branched immersion from $D$ to ${R}^{n} $ by Theorem \ref{remove}. Let $\mathcal{S}^{*}=f_{0}''^{-1}(0)$, by Remark \ref{rmk}, we know $\mathcal{S}^{*}$ is isolated.

First, we claim that, for any $\Omega\subset\subset D\backslash(\mathcal{S}^{*}\cup\mathcal{S}(\{f_{k}''\})$, $f_{k}'$ converges weakly in $W^{2,2}(\Omega,\mathbb{R}^{n})$.
Since $f_{0}''$ is continuous on $\bar{\Omega}$, we can assume $\mbox{dist}(0,f_{0}''(\Omega))>\delta>0$. Then $\mbox{dist}(0,f_{k}''(\Omega))>\frac{\delta}{2}>0$ when $k$ is sufficiently large. Noting that $f_{k}'=\frac{f_{k}''}{\|f_{k}''\|^{2}}+y_{0}$, we get $f_{k}'$ converges weakly in $W^{2,2}(\Omega,\mathbb{R}^{n})$.

If $0\in\mathcal{S}^{*}\cup \mathcal{S}(\{f_{k}''\})$, we know that $0\in \mathcal{S}(\{f_{k}''\})$ since $f_{k}'(0)=0$. Next, we want to prove that for each $p\in\mathcal{S}^{*}\cup \mathcal{S}(\{f_{k}''\})\backslash\{0\}$, $f_{k}'$ also converges in a neighborhood of $p$. Let $g_{f_{k}'}=e^{2u_{k'}}\delta_{ij}$. Since $f_{k}'\in W^{2,2}_{loc}(D_{4r}(p))$ with $\int_{D_{4r}(p)}|A_{f_{k}'}|^{2}d\mu_{f_{k}'}<\gamma_{n}$ when $r$ is sufficiently small and $k$ is sufficiently large, by Theorem \ref{solution} we know that there exists a $\upsilon_{k}$ which solves the following equation
$$-\Delta \upsilon_{k}=K_{f_{k'}}e^{2u_{k'}}\ \ \ z\in D_{4r}(p) \ \ \ \mbox{and}\ \ \|\upsilon_{k}\|_{L^{\infty}(D_{4r}(p))}\leq C.$$

Because of that $f_{k}'$ converges to a conformal immersion in $D_{4r}(p)\backslash D_{\frac{r}{4}}(p)$, by Theorem \ref{thm1} we may assume that
$$\|u_{k}'\|_{L^{\infty}(D_{2r} \backslash D_{r}(p))}\leq C.$$

Since $u_{k}'-\upsilon_{k}$ is a harmonic function with $\|u_{k}'-\upsilon_{k}\|_{L^{\infty}(\partial D_{2r}(p))}\leq C$, from the maximal principle we get $$\|u_{k}'-\upsilon_{k}\|_{L^{\infty}(D_{2r} (p))}\leq C.$$
Thus, the above inequality and the boundedness of $\upsilon_k$ lead to $\|u_{k}'\|_{L^{\infty}(D_{2r}(p))}\leq C$, which implies $$\|\nabla f_{k}'\|_{L^{\infty}(D_{2r} (p))}\leq C.$$
It follows that
$$\|e^{2u_{k}'}H_{f_{k}'}\|^{2}_{L^{2}(D_{2r} (p))}\leq e^{2\|u_{k}'\|_{L^{\infty}(D_{2r} (p))}}\int_{D_{2r} (p)}|H_{f_{k}'}|^{2}d\mu_{f_{k}'}\leq C.$$
Furthermore, from the equation $\Delta f_{k}'=e^{2u_{k}'}H_{f_{k}'}$ we infer
$\|f_{k}'\|_{W^{2,2}(D_{r} (p))}\leq C$. Thus a subsequence of $f_{k}'$ converges weakly.

If $0\in \mathcal{S}(\{f_{k}''\})$, by Theorem \ref{thm1},  we have that for sufficiently small $r$,
$$\|u_{k}'\|_{L^{\infty}(D_{2r} \backslash D_{r}(0))}\leq C,$$
since $f_{k}'$ converges to a conformal immersion in $D_{4r}\backslash D_{\frac{1}{4}r}(0)$.
Hence
$$\|\omega_{k}'\|_{L^{\infty}(D_{2r} \backslash D_{r}(0))}\leq C.$$

Noting that $\omega_k'-\upsilon_{k}$ is a harmonic function and using the above argument again, we conclude that there exists a subsequence of $f_{k}'$  which converges weakly in $W^{2,2}_{loc}$ to a branched conformal immersion in $D_{r}(0)$.
\endproof
\medskip

By Theorem \ref{thm1}, the following corollary holds obviously.

\begin{cor}
Let $f_{k}$ be a sequence of $W^{2,2}$ branched conformal immersions with bounded Willmore functional and induced area which has the same branch points and same branching order at each branch point. Denote
$$\mathcal{S}(\{f_{k}\})=\{p\in \Sigma:\lim_{t\rightarrow 0}\liminf_{k\rightarrow\infty}\int_{D_{t}(p)}|A_{f_{k}}|^{2}d\mu_{f_{k}}\geq\gamma_{n}\}.$$
If $\mathcal{S}(\{f_{k}\})=\emptyset$, then $f_k$ converges weakly in $W^{2,2}(\Sigma)$ either to some $\widetilde{f_0}\in W^{2,2}_{b, c}((\Sigma, g), \R^{n})$ or to a point.
\end{cor}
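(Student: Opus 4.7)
The plan is to localize via Theorem \ref{thm1} and then glue the resulting alternatives across a finite cover of $\Sigma$. Since $\mathcal{S}(\{f_k\})=\emptyset$, for every $p\in\Sigma$ there is an isothermal coordinate disk $U_p$ centered at $p$ on which $\int_{U_p}|A_{f_k}|^{2}d\mu_{f_k}<\gamma_n$ for all sufficiently large $k$. Compactness of $\Sigma$ then produces a finite subcover $\{U_\alpha\}_{\alpha=1}^{N}$, and one may arrange that each of the (finitely many) branch points lies at the center of exactly one chart.

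Next I would normalize. The uniform bounds on area and Willmore energy, combined with Simon's monotonicity formula, force $\diam f_k(\Sigma)\leq C$ uniformly in $k$, so after replacing $f_k$ by $f_k-f_k(p_0)$ for a fixed $p_0\in\Sigma$ one may assume $\|f_k\|_{L^{\infty}(\Sigma)}\leq C$. In each chart $U_\alpha$ pick a base point $p_\alpha$ (choose the branch point itself whenever $U_\alpha$ contains one). A diagonal extraction over the finite index set lets one assume $f_k(p_\alpha)\to c_\alpha\in\R^n$ for every $\alpha$. Applying Theorem \ref{thm1} to the further-translated sequence $f_k-f_k(p_\alpha)$ on $U_\alpha$, with the branching order dictated by $p_\alpha$, and diagonalizing once more, produces a single subsequence which in every chart falls into either the alternative (i) $f_k\rightharpoonup h_\alpha+c_\alpha$ weakly in $W^{2,2}(U_\alpha)$ for some $W^{2,2}$ branched conformal immersion $h_\alpha$, or (ii) $f_k\to c_\alpha$ uniformly on $U_\alpha$. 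Theorem \ref{thm1} also supplies a uniform $W^{2,2}_{loc}$ bound in each chart, so by the finite cover the sequence $\{f_k\}$ is uniformly bounded in $W^{2,2}(\Sigma)$.

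The decisive step is to rule out mixed behavior across overlaps. Suppose alternative (i) holds on $U_\alpha$ and (ii) on $U_\beta$ with $U_\alpha\cap U_\beta\neq\emptyset$. Uniqueness of the weak $W^{2,2}$ limit on the overlap forces $h_\alpha\equiv c_\beta-c_\alpha$ there; however, by Theorem \ref{remove} a $W^{2,2}$ branched conformal immersion has non-vanishing differential off its isolated branch point, so it cannot be constant on any open set. This contradiction forces either alternative (i) in every chart or alternative (ii) in every chart. In the first case the local limits $h_\alpha+c_\alpha$ agree on overlaps by uniqueness of weak limits, patch together into a global $\widetilde{f}_0\in W^{2,2}_{b,c}((\Sigma,g),\R^n)$, and $f_k\rightharpoonup\widetilde{f}_0$ weakly in $W^{2,2}(\Sigma)$. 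In the second case, connectedness of $\Sigma$ together with matching of constants on overlaps forces a single $c\in\R^n$ with $c_\alpha=c$ for all $\alpha$, whence $f_k\to c$ uniformly on $\Sigma$, and uniform convergence combined with the uniform $W^{2,2}$ bound yields $f_k\rightharpoonup c$ weakly in $W^{2,2}(\Sigma)$.

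The hard part is the bookkeeping of the nested subsequences and the consistency analysis on overlaps; the key conceptual input beyond Theorem \ref{thm1} is the \emph{non-degeneracy} observation that a genuine branched conformal immersion cannot be locally constant on any open set, which is precisely what prevents one chart from producing a non-trivial immersion limit while an adjacent chart collapses to a point.
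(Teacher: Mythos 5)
Your argument is correct and is essentially the paper's: the paper simply asserts that the corollary ``holds obviously'' by Theorem \ref{thm1}, and your write-up is the natural fleshing-out of that claim --- cover $\Sigma$ by finitely many small-energy isothermal disks, apply the dichotomy of Theorem \ref{thm1} in each, and use that a branched conformal immersion cannot be locally constant to exclude mixed behavior on overlaps. The normalization via Simon's diameter bound and the patching on a connected finite cover are exactly the details the paper leaves implicit.
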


Now we give the definition of the generalized limit of $f_k$.

\begin{defi}\label{general}
Let $\{f_{k}\}$ be a sequence of $W^{2,2}$ branched conformal immersions with bounded Willmore functional and induced area which has the same branch points and same branching order at each branch point. Denote
$$\mathcal{S}(\{f_{k}\})=\{p\in \Sigma:\lim_{t\rightarrow 0}\liminf_{k\rightarrow\infty}\int_{D_{t}(p)}|A|_{f_{k}}^{2}d\mu_{f_{k}}\geq\gamma_{n}\}.$$
By Theorem \ref{thm1} and \ref{thm2}, we know there exists some point $x_{0}\in \Sigma\backslash\mathcal{S}(\{f_{k}\})$ and $\lambda_{k}=1\ or\ \lambda_{k}\rightarrow 0$ such that a subsequence $f_{k}'=\frac{f_{k}-f_{k}(x_{0})}{\lambda_k}$ converges in $W^{2,2}_{loc}(\Sigma\backslash \mathcal{S}(\{f_{k}\}))$ to a $W^{2,2}$  branched conformal immersion $f_{0}$ from $\Sigma\backslash \mathcal{S}(\{f_{k}\})$ to $\R^{n}$. Usually we call $f_{0}$ the generalized limit of $f_{k}$. We say the limit is trivial, if
$$\lambda_{k}\rightarrow 0\s\s  \mbox{and} \s\s  \int_{\Sigma\backslash \mathcal{S}(\{f_{k}\})} K_{f_{0}}d\mu_{f_0}=0.$$
\end{defi}
\begin{rem}
Here we want to give a remark about the generalized limit. For any $p\in \mathcal{S}\{f_k\}$, there are two possibilities for $\mu_{f_0}(B_{r}(p))$ where $B_r(p)$ is a small neighborhood of $p$. The one is $\mu_{f_0}(B_r(p))<\infty$, then by Theorem \ref{remove}, we know $p$ is a branch point of $f_0$;  Another possibility is $\mu_{f_0}(B_r(p))=\infty$, by Theorem \ref{infty}, we know $f_0$ is complete near $p$. In \cite{lamm-scha} and \cite{lamm-nguyen}, the authors defined the $W^{2,2}$ branched immersion by a different way (see definition 2.1 in \cite{lamm-scha} and definition 2.2 in \cite {lamm-nguyen}).
In the definition, branch points contains the point which satisfies either of the above two possibilities. So we can still view $f_0$ as a branched immersion of $\Sigma$ into $\R^{n}$. \end{rem}

\subsection{Convergence of conformal minimal branched immersions}
In this subsection, we will give an application of Theorem \ref{thm1} and Theorem \ref{thm2}. Let $N$ be a compact Riemannian manifold without boundary with dimension $m$. By Nash's imbedding Theorem, $N$ can be isometrically embedded in $\mathbb{R}^{n}$. So any immersion from $\Sigma$ in $N$ can be regarded as an immersion in $\mathbb{R}^{n}$.
\begin{pro}\label{minimal}
Let $f_{k}: D\rightarrow N$ be a sequence of conformal minimal branched immersions with branching order m at $0$ which satisfies
\begin{enumerate}
\item[(1)] $$\int_{D}|A_{f_{k}\comp D, \R^{n}}|^{2}d\mu_{f_{k}}\leq \gamma_{n}-\tau\s\s \mbox{and}\s\s \sup_{k}\mu_{f_{k}}(D)<\Lambda;$$
\item[(2)] $f_{k}(D)$  can be extended to a closed branched immersed surface $f_{k}:\Sigma\rightarrow \R^{n}$ with$$\int_{\Sigma}|A_{f_{k}\comp \Sigma, \R^{n}}|^{2}d\mu_{f_{k}}<\Lambda.$$
\end{enumerate}
Then either $f_{k}$ converges smoothly to a conformal minimal branched immersion from $D$ into $N$ with $\|\omega_{k}\|_{L^{\infty}(D_r(0))}<C(r)$, or $f_{k}$ converges to a point. For the case $f_{k}$ converges to a point, there exists $\lambda_{k}\rightarrow0$, such that $f_{k}'=\frac{f_{k}-f_{k}(0)}{\lambda_{k}}$ converge smoothly to a minimal branched immersion of $D$ in $\mathbb{R}^n$ with $\|\omega_{k}'\|_{L^{\infty}(D_r(0))}<C(r)$.
\end{pro}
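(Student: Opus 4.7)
The plan is to apply Theorem \ref{thm1} to $\{f_k\}$ to get a $W^{2,2}$ dichotomy, invoke Theorem \ref{thm2} to handle the degenerate branch, and then upgrade weak $W^{2,2}_{loc}$ convergence to smooth convergence by bootstrapping the harmonic map equation that a conformal minimal immersion into $N$ satisfies. Hypothesis (1) of the proposition gives $\int_D|A_{f_k}|^2\,d\mu_{f_k}\leq\gamma_n-\tau<\gamma_n$ and $\sup_k\mu_{f_k}(D)<\Lambda$, so Theorem \ref{thm1} applies and furnishes the dichotomy: either (a) $f_k\rightharpoonup f$ weakly in $W^{2,2}_{loc}(D,\R^n)$ to a branched conformal immersion of branching order $m$ at $0$ with $\|\omega_k\|_{L^\infty(D_r)}+\|D\omega_k\|_{L^2(D_r)}\leq C(r)$, or (b) $\omega_k\to-\infty$ and $f_k\to 0$ locally uniformly on $D$.

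In case (a), I would use that a conformal minimal immersion $f_k\colon D\to N\hookrightarrow\R^n$ is a harmonic map, which in any isothermal coordinate reduces to the semilinear elliptic system
\[
\Delta f_k \;=\; B^N(f_k)(\partial_1 f_k,\partial_1 f_k)+B^N(f_k)(\partial_2 f_k,\partial_2 f_k),
\]
where $B^N$ is the second fundamental form of $N$ in $\R^n$ (the conformal factor cancels on the right). The bound $\|\omega_k\|_{L^\infty(D_r)}\leq C(r)$ yields $|df_k|^2=2|z|^{2m}e^{2\omega_k}\in L^\infty_{loc}$, so the right-hand side sits in $L^\infty_{loc}$, and Calder\'on--Zygmund plus Schauder bootstrapping upgrade the weak $W^{2,2}_{loc}$ convergence to $C^\infty_{loc}$ convergence; the limit is then a smooth conformal minimal branched immersion into $N$.

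In case (b), I would pick a smooth curve $\gamma\colon[0,1]\to D$ with $\gamma(0)=0$, set $\lambda_k=\diam f_k(\gamma)\to 0$, and invoke Theorem \ref{thm2}: a subsequence of $f_k'=(f_k-f_k(0))/\lambda_k$ converges weakly in $W^{2,2}_{loc}(D,\R^n)$ to a branched conformal immersion $f_0$ with $\|\omega_k'\|_{L^\infty(D_r)}\leq C(r)$. Under the rescaling the harmonic map equation becomes
\[
\Delta f_k' \;=\; \lambda_k\bigl[B^N(f_k)(\partial_1 f_k',\partial_1 f_k')+B^N(f_k)(\partial_2 f_k',\partial_2 f_k')\bigr],
\]
whose right-hand side vanishes locally in $L^\infty$ because $\lambda_k\to 0$, $f_k$ is bounded in $N$, and $df_k'$ is locally bounded. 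Passing to the limit gives $\Delta f_0=0$; combined with conformality this makes $f_0$ a branched minimal immersion from $D$ into $\R^n$, and the same bootstrap as in case (a) delivers smooth convergence. The main obstacle is exactly this rescaled step: one must observe that the ambient nonlinearity $B^N(f)(df,df)$ is bilinear in $df$, so rescaling $df\mapsto df/\lambda_k$ together with $\Delta\mapsto\Delta/\lambda_k$ leaves a surviving factor of $\lambda_k$, which drives the equation to the flat harmonic map equation in the limit; the removable-singularity Theorem \ref{remove} together with the uniform $\omega_k'$ bound then absorbs the branch point at $0$.
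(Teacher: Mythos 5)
Your proposal is correct and follows essentially the same route as the paper: identify the conformal minimal immersion as a harmonic map into $N$, use conformal invariance to pass to the flat Laplacian, apply Theorem \ref{thm1} for the dichotomy and Theorem \ref{thm2} in the degenerate case, and observe that the rescaled equation carries a surviving factor of $\lambda_{k}$ so that the limit is minimal in $\R^{n}$; the elliptic bootstrap you spell out is exactly what the paper leaves implicit. No substantive differences.
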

\proof
It is easy to see that $f_{k}$ is a harmonic map from $(D,g_{f_{k}})$ to $N$ (see \cite{Sacks-Uhlenbeck1}). Hence,  $f_k$ satisfies the following Euler-Lagrange equation
$$-\Delta_{g_{k}}f_{k}=A_{N, \mathbb{R}^{n}}(f_{k})(\nabla_{g_{k}}f_{k},\nabla_{g_{k}}f_{k}).$$
By the conformal invariance of harmonic map equation, we have
\begin{equation}\label{harmonic1}
-\Delta f_{k}=A_{N, \mathbb{R}^{n}}(f_{k})(\nabla f_{k},\nabla f_{k}).
\end{equation}

We need to consider the following two cases. If $\{f_{k}\}$ does not converge to a point, then by Theorem \ref{thm1} we know that $f_{k}$ converge weakly in $W^{2,2}_{loc}$ and $$\|\omega_{k}\|_{L^{\infty}(D(r))}< C(r).$$ It follows that $$\|\nabla f_{k}\|_{L^{\infty}(D_r(0))}=\||z|^{m}e^{\omega_{k}}\|_{L^{\infty}(D_{r}(0))}< C(r).$$
Hence, from \eqref{harmonic1} we know that $f_{k}$ converge smoothly.

If $\{f_{k}\}$ converges to a point, Theorem \ref{thm2} tells us that there exists $\lambda_{k}\rightarrow 0$ such that $f_{k}'$ converge in $W^{2,2}_{loc}$. Because $f_{k}'$ satisfies the following equation
\begin{equation}\label{harmonic2}
-\Delta f_{k}'=\lambda_{k}A(\lambda_{k}f_{k}'+f_{k}(0))(\nabla f_{k}',\nabla f_{k}'),
\end{equation}
it follows that $f_{k}'$ converges smoothly to a minimal branched immersion.
\endproof
\medskip

\section{Bubble tree and total curvature identity }\label{bubble}
In this section, we will first give a bubble tree construction of $f_k$, and then provide the proof of Theorem \ref{identity}.
\subsection{Convergence of integral of Gauss curvature}

\begin{lem}
Let $\{f_k\}$ be a sequence of branched conformal  immersions from $D$ in $\R^{n}$ with $\sup_k\int_D|A_{f_k}|^{2}<\gamma_n$, which converges weakly in $W^{2,2}_{loc}(D)$ to an $f_{0}\in W^{2,2}_{b,c}(D)$, where $f_{0}$ is not a point map. Then we have
\begin{enumerate}
\item[(1)] for any $t\in (0,1)$
$$\lim_{k\rightarrow +\infty}\int_{D_{t}}K_{f_{k}}d\mu_{f_{k}}=\int_{D_{t}}K_{f_{0}}d\mu_{f_{0}}.$$
\item[(2)] for any $t\in(0,1)$,
$$\lim_{k\rightarrow +\infty}\int_{\partial D_{t}}\frac{\partial u_{k}}{\partial r}=\int_{\partial D_{t}}\frac{\partial u_{0}}{\partial r};$$
\end{enumerate}
\end{lem}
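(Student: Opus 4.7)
The plan is to establish claim (1) first and then deduce (2) from it via a Gauss--Bonnet/Green identity. After passing to a subsequence, we may arrange that $f_k$ and $f_0$ share a common branching order $m\in\N_{0}$ at $0$ (by Theorem \ref{thm1}); writing $u_k=m\log|z|+\omega_k$ and $u_0=m\log|z|+\omega_0$, the Liouville equation reads $-\Delta\omega_k=K_{f_k}e^{2u_k}$ in $D$. Green's identity on $D_t$ then yields
$$
\int_{D_t}K_{f_k}\,d\mu_{f_k}=-\int_{\partial D_t}\frac{\partial\omega_k}{\partial r}\,ds=2\pi m-\int_{\partial D_t}\frac{\partial u_k}{\partial r}\,ds,
$$
and the analogous identity holds for $f_0$. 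Consequently (1) and (2) are equivalent, so it suffices to prove (1).

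For (1) the strategy is to upgrade the weak $W^{2,2}_{loc}$ convergence of $f_k$ to convergence strong enough to pass to the limit in the Jacobian representation of $K_f\, e^{2u}$ due to M\"uller--\v{S}ver\'ak. In two dimensions the Sobolev embedding $W^{2,2}\hookrightarrow W^{1,p}$ is compact for every $p<\infty$, so $Df_k\to Df_0$ strongly in $L^p_{loc}(D,\mathbb{R}^n)$. Combined with the uniform $L^\infty_{loc}$ bound on $\omega_k$ supplied by Theorem \ref{thm1}, this produces strong $W^{1,2}_{loc}(D\setminus\{0\})$ convergence of the pairs appearing in the M\"uller--\v{S}ver\'ak expansion
$$
K_{f}\,e^{2u}=\sum_i\bigl(\partial_x\varphi_i^{(f)}\,\partial_y\psi_i^{(f)}-\partial_y\varphi_i^{(f)}\,\partial_x\psi_i^{(f)}\bigr),
$$
and hence $K_{f_k}e^{2u_k}\to K_{f_0}e^{2u_0}$ in $L^1_{loc}(D\setminus\{0\})$.

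To upgrade this punctured-disk convergence to convergence of the integrals over $D_t$ itself, one must rule out a residual Dirac concentration at $0$; this is the main obstacle. It is handled by the M\"uller--\v{S}ver\'ak Hardy-space estimate underlying Theorem \ref{solution}: the hypothesis $\int_D|A_{f_k}|^2\le\gamma<\gamma_n$ forces a uniform bound $\|K_{f_k}e^{2u_k}\|_{\mathcal{H}^1(D)}\le C(\gamma)$, so the family $\{K_{f_k}e^{2u_k}\,dx\}$ is equi-integrable on $D$. Splitting $D_t=(D_t\setminus D_\rho)\cup D_\rho$, making the contribution on $D_\rho$ uniformly small in $k$ via equi-integrability and using the previous step on the annulus $D_t\setminus D_\rho$, one deduces (1); claim (2) then follows from the identity of the first paragraph.
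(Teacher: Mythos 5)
Your overall architecture (reduce (2) to (1) by integrating the Liouville equation, then prove convergence of the curvature measures and exclude concentration at the origin) is sound, and your first paragraph is exactly how the paper deduces (2) from (1). But both key analytic steps in your proof of (1) are unjustified as written. First, the claimed strong $W^{1,2}_{loc}(D\setminus\{0\})$ convergence of the pairs $(\varphi_i^{(f_k)},\psi_i^{(f_k)})$ in the M\"uller--\v{S}ver\'ak expansion does not follow from strong $L^p_{loc}$ convergence of $Df_k$ together with the $L^\infty$ bound on $\omega_k$: those pairs are components of the normalized frame (equivalently of the Gauss map), so their gradients involve the second derivatives of $f_k$, which converge only \emph{weakly} in $L^2$. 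What the Jacobian structure actually yields under weak $W^{1,2}$ convergence of the factors is convergence of $K_{f_k}\me^{2u_k}$ in the sense of distributions (weak continuity of Jacobians, i.e.\ the div--curl lemma), not in $L^1_{loc}$. Second, a uniform bound $\|K_{f_k}\me^{2u_k}\|_{\mathcal{H}^1}\le C$ does not imply equi-integrability: the $\mathcal{H}^1$ norm is invariant under the $L^1$-normalized rescaling $h\mapsto \epsilon^{-2}h(\cdot/\epsilon)$, so a mean-zero concentrating family has uniformly bounded $\mathcal{H}^1$ norm while all of its mass collapses to a point. Hence your mechanism for ruling out a residual Dirac mass at $0$ does not work.

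The paper avoids both difficulties by using the conformal factor itself as the potential: since $-\Delta\omega_k=K_{f_k}\me^{2u_k}$ and, by Theorem \ref{thm1} and Remark \ref{userem}, $\omega_k\rightharpoonup\omega_0$ weakly in $W^{1,2}_{loc}(D)$, one gets directly that $K_{f_k}\me^{2u_k}\to -\Delta\omega_0=K_{f_0}\me^{2u_0}$ in the distributional sense on all of $D$; this identification of the limit automatically excludes any atomic part at the origin, which is precisely what your Hardy-space argument was meant to address. The passage from distributional convergence to convergence of $\int_{D_t}$ is then carried out via weak-$*$ convergence of Radon measures with uniformly bounded total variation (note $\int_D|K_{f_k}|\,d\mu_{f_k}\le\tfrac12\int_D|A_{f_k}|^2\,d\mu_{f_k}$), citing Evans--Gariepy, rather than by your annulus-plus-small-disk splitting. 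If you wish to keep the Jacobian route, replace ``strong'' by ``weak'' and ``$L^1_{loc}$ convergence'' by ``distributional convergence''; you then still need a genuine argument at the origin, and the cleanest one is exactly the identity $-\Delta\omega_k=K_{f_k}\me^{2u_k}$.
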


\proof
Let $g_{f_{k}}=e^{2u_{k}}g_{euc}$. Then, by Theorem \ref{remove} we know that $u_{k}$ satisfies
\begin{equation}\nonumber
-\Delta u_{k}=-2m\pi \delta_{0}+K_{f_{k}}e^{2u_{k}}.
\end{equation}
Thus, $\omega_{k}$ satisfies
\begin{equation}\label{Guo}
-\Delta \omega_{k}=K_{f_{k}}e^{2u_{k}},
\end{equation}
for $k=0,1,\cdots $.
By Theorem \ref{thm1}, Remark \ref{userem} and Remark \ref{rmk}, it is easy to see that $\omega_k$ converges to $\omega_0$ weakly in $W^{1,2}_{loc}(D)$. Thus for any $\phi\in C^{\infty}_{0}(D)$, we have
$$\int_{D}\nabla \omega_k \nabla \phi\rightarrow \int_{D}\nabla \omega_0 \nabla \phi. $$
It follow from \eqref{Guo} that
$$\int_{D}K_{f_k}e^{2u_k}\phi\rightarrow \int _{D} K_{f_0}e^{2{u_0}}\phi.$$
In other worlds, $K_{f_{k}}e^{2u_{k}}$ converges to $K_{f_{0}}e^{2u_{0}}$ in distribution sense. Then, $(1)$ can be directly deduced by Theorem 1 in \cite{evans} on page 54.

Now, we return to the proof of (2).  Since
$$\lim_{k\rightarrow +\infty}\int_{D_{t}}K_{f_{k}}d\mu_{f_{k}}=\int_{D_{t}}K_{f_{0}}d\mu_{f_{0}}.$$
for any $t\in(0,1)$, it is easy to see from \eqref{Guo} that
$$\lim_{k\rightarrow +\infty}\int_{\partial D_{t}}\frac{\partial \omega_{k}}{\partial r}=\int_{\partial D_{t}}\frac{\partial \omega_{0}}{\partial r},$$
which implies
$$\lim_{k\rightarrow +\infty}\int_{\partial D_{t}}\frac{\partial u_{k}}{\partial r}=\int_{\partial D_{t}}\frac{\partial u_{0}}{\partial r}.$$
\endproof
\subsection{First Bubble}\label{bubbletree}
We now turn our attention to giving the precise bubble tree construction.
Let $f_{k}: D\rightarrow \R^{n}$ be a sequence of conformal branched immersions with branching order $m$ at $0$ which satisfies
\begin{enumerate}
\item[(1)] $$\sup_{k}\int_{D}(1+|A_{f_{k}}|^{2})d\mu_{f_{k}}<\Lambda;$$
\item[(2)] $f_{k}(D)$  can be extended to a closed branched immersed surface $f_{k}:\Sigma\rightarrow \R^{n}$ with
$$\int_{\Sigma}|A_{f_{k}\comp \Sigma, \R^{n}}|^{2}d\mu_{f_{k}}<\lambda.$$
\end{enumerate}
Denote
$$\mathcal{S}(\{f_{k}\}):\ = \left\{p\in D:\lim_{t\rightarrow 0}\liminf_{k\rightarrow\infty}\int_{D_{t}(p)}|A_{f_{k}}|^{2}d\mu_{f_{k}}\geq\gamma_{n}\right\}.$$
For simplicity, we assume $\mathcal{S}(\{f_{k}\})=\{0\}$. Let $r_{k}$ be the smallest $r$, s.t.
$$\sup_{x\in\overline{D_{\frac{1}{2}}(0)}}\int_{D_{r}(x)}|A_{f_{k}}|^{2}d\mu_{f_{k}}=\gamma_{n}-\epsilon_{0},$$
here $\epsilon_{0}$ is any fixed sufficiently small positive constant. Let $z_{k}\in\overline{D_{\frac{1}{2}}}$, s.t.
$$\int_{D_{r_{k}}(z_{k})}|A_{f_{k}}|^{2}d\mu_{f_{k}}=\gamma_{n}-\epsilon_{0}.$$
We have the following two claims: $1)\s r_k\rightarrow0\s\s\mbox{and}\s\s 2)\s z_{k}\rightarrow 0$.\medskip

If $1)$ is false, assume $r_{k}\rightarrow r_0>0$. By the definition of $r_{k}$ it is easy to see that
$$\int_{D_{\frac{r_{0}}{2}}(0)}|A_{f_{k}}|^{2}d\mu_{f_{k}}<\gamma_{n}-\epsilon_{0},$$
when $k$ sufficiently large. Hence $0\notin\mathcal{S}(\{f_{k}\})$, which contradicts the fact $\mathcal{S}(\{f_{k}\})=\{0\}$.

If $2)$ is not true, then the limit of $z_{k}$ is another concentration point, which contradicts to our assumptions that $\mathcal{S}(\{f_{k}\})=0$.\medskip

Now let $f'_{k}(x)=f_{k}(r_{k}x+z_{k})-f_{k}(z_{k})$, then we have
$$\int_{D_{r_{k}}(z_{k})}|A_{f_{k}}|^{2}d\mu_{f_{k}}=\int_{D}|A_{f'_{k}}|^{2}d\mu_{f'_{k}}=\gamma_{n}-\epsilon_{0}.$$
Note that for any fixed $R>0$, $x\in D_{R}(0)$,  there holds
$$\int_{D_{1}(x)}|A_{f_{k}}|^{2}d\mu_{f_{k}}\leq \gamma_{n}-\epsilon_{0},$$
when $k$ sufficiently large . Thus  $\mathcal{S}(\{f'_{k}\})\cap D_{R}(0)=\emptyset$. By Theorem \ref{thm1} and Theorem \ref{thm2}, we know the generalized limit $f^{1}$ of $f'_{k}$ is a conformal branched immersion of $\mathbb{C}$ with
$$\int_{D}|A_{f^{1}}|^{2}d\mu_{f^{1}}\leq \gamma_{n}-\epsilon_{0},$$
Usually, we call $f^{1}$ the first bubble. We call the bubble is trivial if $ \int_{\C} K_{f^{1}}d\mu_{f^{1}}=0$.

By the above argument,  the blow up behavior of $f_k$ in $D\backslash D_{\delta}(0)$ where $\delta$ can be sufficiently small and $D_{Rr_k}(z_k)$ where $R$ is any positive number is already clear. In order to study the blow up behavior of $f_k$ in $D_{\delta}\backslash D_{Rr_k}(z_k)$, as harmonic maps, it will be convenient  to make a conformal transformation which converts $D\backslash D_{r_k}(z_k)$ to a long cylinder. More precisely, let $(r,\theta)$ be the polar coordinate centered at $z_k$ and set $T_k=-\ln r_k$. Let $\phi_k: S^{1}\times [0,T_k]\rightarrow \R^{2}$ be the conformal mapping given by $\phi_k(\theta,t)=z_k+(e^{-t},\theta)$. Then
$$\phi_k^{*}(dx^{1}\otimes dx^{1}+dx^{2}\otimes dx^{2})=e^{-2t}(dt^{2}+d\theta^{2}).$$
Thus $f_k\comp \phi_k$ is a sequence of conformal immersions of $S^{1}\times [0,T_k]$ in $\R^{n}$. We also denote $f_k\comp \phi_k$ by $f_k$ for simplicity of notations.

By the construction of the first bubble, it is easy to see that $\frac{f_k(\theta, t)-f_k(0,1)}{\diam{(f_k}(S^{1}\times \{1\}))}$ convergence weakly in $W^{2,2}_{loc}(S^{1}\times [0,\infty])$ to some $f^{+}\in W^{2,2}_{conf, loc}(S^{1}\times [0,\infty))$ and $\frac{f_k(\theta, t+T_k)-f_k(0,T_k-1)}{\diam{(f_k}(S^{1}\times \{T_k-1\}))}$ convergence weakly in $W^{2,2}_{loc}(S^{1}\times (-\infty,0])$ to some $f^{-}\in W^{2,2}_{conf, loc}(S^{1}\times (-\infty,0])$. 	We call $f^{-},f^{+}$ the top and the bottom of $f_k$ on $S^{1}\times [0,T_k]$ respectively. Additionally, we say $f_k$ has concentration on $S^{1}\times[0,T_k]$, if we can find $(\theta_k,t_k)\in S^{1}\times [0,T_k]$, such that
$$\lim_{r\rightarrow 0}\lim_{k\rightarrow \infty}\int_{B_{r}(\theta_k,t_k)}|A_{f_k}|^{2}d\mu_{f_k}\geq \gamma_n.$$

\subsection{When $f_k$ has no concentration.} In this subsection, we will give a bubble tree construction and prove the "no-neck" property when $f_k$ has no concentration on $[0,T_k]$.
\begin{lem}\label{cb1}
If $f_k$ has no concentration points in $S^{1}\times[0,T_k]$, then after passing to a subsequence, we may find $0=t_k^{0}<t_k^{1}<\cdots<t_k^{d}<t_k^{d+1}=T_k$ where $d<\frac{\Lambda}{4\pi}$such that
\begin{equation}\label{neck1}
t_k^{i}-t_k^{i-1}\rightarrow\infty,\s \mbox{for} \s i=1,\cdots, d+1;
\end{equation}
\begin{equation}\label{neck2}
\int_{S^{1}\times \{t_k^{i}\}}\frac{\partial u_k}{\partial t}=2m_i\pi+\pi,\s \mbox{where} \s m_i \in \Z \s \mbox{and} \s i=1,\cdots,d;
\end{equation}
and, for any $l_k\in(t_k^{i},t_k^{i+1})$ where $i=0,\cdots, d$ with $l_k-t_k^{i}\rightarrow+\infty$ and $t_k^{i+1}-l_k\rightarrow +\infty$, there holds true
\begin{equation}\label{neck3}
\int_{S^{1}\times\{l_k\}}\frac{\partial u_k}{\partial t}\neq 2m\pi+\pi, \ \ \mbox{where}\  m\in\Z.
\end{equation}
\end{lem}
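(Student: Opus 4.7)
The plan is to analyze the angular flux
$$\alpha_k(t) := \int_{S^{1} \times \{t\}} \frac{\partial u_k}{\partial t}\, d\theta$$
along the cylinder. Using the Liouville equation $\Delta u_k = -K_{f_k} e^{2u_k}$ (valid since $f_k$ is conformal with $g_{f_k} = e^{2u_k}(dt^{2}+d\theta^{2})$), integration over $S^{1} \times [s,t]$ together with $\int_{S^{1}} \partial_{\theta}^{2} u_k\, d\theta = 0$ yields the identity
$$\alpha_k(t) - \alpha_k(s) = -\int_{S^{1} \times [s,t]} K_{f_k}\, d\mu_{f_k}.$$
Consequently, the total variation of $\alpha_k$ on $[0, T_k]$ is bounded by $\tfrac12\int|A_{f_k}|^{2}d\mu_{f_k} < \Lambda/2$, uniformly in $k$. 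Since consecutive odd multiples of $\pi$ differ by $2\pi$, the set $C_k := \{t \in [0,T_k] : \alpha_k(t) \in (2\Z+1)\pi\}$ of crossings has cardinality at most $\lfloor \Lambda/(4\pi) \rfloor + 1$, independently of $k$.

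Next I would organize $C_k$ into clusters by a diagonal extraction: iteratively, if two points of $C_k$ stay at uniformly bounded distance along a subsequence they are grouped together, otherwise their mutual distance tends to $\infty$. After finitely many steps (bounded by $|C_k|$) this produces clusters $\mathcal{G}_{1}(k), \ldots, \mathcal{G}_{D}(k)$ with within-cluster distances bounded and between-cluster distances tending to $\infty$. Each cluster is classified as either a \emph{boundary} cluster (its points stay at bounded distance from $0$ or from $T_k$) or an \emph{interior} cluster (distances to both $0$ and $T_k$ tend to $\infty$). I discard the boundary clusters, which will be absorbed by the fixed endpoints $t_k^{0}=0$ and $t_k^{d+1}=T_k$, and select one representative $t_k^{i}$ per interior cluster; a further subsequence fixes a common integer $m_i \in \Z$ with $\alpha_k(t_k^{i}) = (2m_i+1)\pi$, giving (\ref{neck2}). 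Condition (\ref{neck1}) follows from the between-cluster separation, and (\ref{neck3}) is automatic since any $l_k$ with $l_k - t_k^{i}, t_k^{i+1} - l_k \to \infty$ sits at infinite distance from every cluster and therefore cannot belong to $C_k$ for large $k$. The bound $d \leq \Lambda/(4\pi)$ is inherited from the total-variation bound.

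In the wider bubble-tree context, Theorems \ref{thm1} and \ref{thm2} ensure that around each interior representative $t_k^{i}$ a suitable rescaling of $f_k$ subsequentially converges in $W^{2,2}_{loc}$ to a $W^{2,2}$ branched conformal immersion of a sub-cylinder (a bubble); Theorems \ref{remove} and \ref{infty}, applied after the conformal pull-back $(\theta,t) \mapsto e^{-t+\mathrm{i}\theta}$, pin down the asymptotic form $-(m+1)t + O(1)$ of the cylindrical log-conformal factor of each such bubble near its punctures, so the limiting flux at each end is an even multiple of $\pi$. This is consistent with (\ref{neck3}): in the interior of each piece the flux $\alpha_k$ can only approach even-multiple asymptotic values, not odd ones. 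The hard part will be executing the diagonal subsequence argument cleanly so that the cluster count $D$, the boundary/interior labels, and the integers $m_i$ all stabilise along a single subsequence; the uniform bound on $|C_k|$ limits the number of diagonalization steps and makes this feasible.
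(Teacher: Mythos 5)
There is a genuine gap at the central counting step. From the total--variation bound $\sum_k |\alpha_k(t)-\alpha_k(s)|\le \tfrac12\int|A_{f_k}|^2\,d\mu_{f_k}$ you may conclude that $\alpha_k$ visits at most $\Lambda/(4\pi)+1$ \emph{distinct levels} of the form $(2m+1)\pi$, but not that the set $C_k=\{t:\alpha_k(t)\in(2\Z+1)\pi\}$ has bounded cardinality. A continuous function of tiny total variation can touch or cross the \emph{same} level $(2m+1)\pi$ at arbitrarily many, arbitrarily widely separated points (or on a whole interval), so $C_k$ need not be finite and certainly need not be covered by a bounded number of clusters. Since your entire cluster/diagonal extraction, the bound $d<\Lambda/(4\pi)$, and the verification of \eqref{neck3} all rest on this false cardinality bound, the argument does not go through as written.

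What actually rules out widely separated returns of $\alpha_k$ to an odd multiple of $\pi$ is not the oscillation of $\alpha_k$ but an energy quantization obtained by blowing up at each such point: if $l_k\to\infty$, $T_k-l_k\to\infty$ and $\alpha_k(l_k)=(2m+1)\pi$, then (no concentration plus Theorem \ref{thm2}) a rescaling of $f_k(\theta,l_k+t)$ converges to a complete $f^1\in W^{2,2}_{conf,loc}(S^1\times\R)$ whose flux equals $(2m+1)\pi$ at $t=0$ but, by Theorem \ref{infty} and Remark \ref{rmk}, tends to even multiples of $\pi$ at both ends; hence $\bigl|\int K_{f^1}\bigr|\ge\pi$ on each half-cylinder and $\int_{S^1\times\R}|A_{f^1}|^2\,d\mu_{f^1}\ge 4\pi$. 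This $4\pi$ cost per division point is what bounds $d$ and, in the paper, drives an induction on $\lceil\Lambda/(4\pi)\rceil$: each detected point splits the cylinder into two pieces each carrying strictly less energy, to which the inductive hypothesis applies. You mention the even-flux asymptotics of the bubbles only as a consistency check at the end; they need to be promoted to the engine of the proof, replacing the level-set count.
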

\begin{proof}
Firstly, noting that for any $0\leq r\leq s\leq T_k$,
\begin{equation}\nonumber
\left|\int_{S^{1}\times\{r\}}\frac{\partial u_k}{\partial t}-\int_{S^{1}\times\{s\}}\frac{\partial u_k}{\partial t}\right|=\left|\int_{S^{1}\times [r,s]}K_{f_k}d\mu_{f_k}\right|\leq\int_{S^{1}\times[r,s]}|A_{f_k}|^{2}d\mu_{f_k}\end{equation}

Since $\int |A_{f_k}|^{2}d\mu_{f_k}$ is uniformly bounded, we know $\int_{S^{1}\times\{r\}}\frac{\partial u_k}{\partial t}$ is a continuous function with respect to $r$.

Suppose $\Lambda<4m\pi,$
where $m$ is a positive integer. We prove the lemma by induction on $m$.

When $m=1$,  we know $d=0$ and \eqref{neck2} is vacuous. We only need to prove \eqref{neck3}. If \eqref{neck3} is false, then there exists a sequence $l_k\in[0,T_k]$ satisfying $l_k\rightarrow+\infty$ and $T_k-l_k\rightarrow +\infty$ such that
$$\int_{S^{1}\times\{l_k\}}\frac{\partial u_k}{\partial t}= 2m\pi+\pi, \s \mbox{for some}\  m\in\Z.$$
Combining the fact that $f_k$ has no concentration with Theorem \ref{thm2}, we know there exists some $f^{1}\in W^{2,2}_{conf,loc}(S^{1}\times \R)$ such that
$$f_k'(t)=\frac{f_k(\theta,l_k+t)-f_k(0,l_k)}{\diam (f_{k}(S^{1}\times\{l_k\}))}\rightarrow f^{1} \s \mbox{weakly in} \s W^{2,2}_{loc}(S^{1}\times \R).$$
Moreover, if we denote conformal factor of $f^{1}$ by $e^{2u_{f^{1}}}$, then
$$\int_{S^{1}\times \{0\}}\frac{\partial u_{f^{1}}}{\partial t}=2m\pi+\pi.$$
By Theorem \ref{infty} and Remark \ref{rmk}, we know
$$\lim_{t\rightarrow+\infty}\int_{S^{1}\times \{t\}}\frac{\partial u_{f^{1}}}{\partial t}=2m_1\pi,\s
\mbox{and}\s
\lim_{t\rightarrow-\infty}\int_{S^{1}\times \{t\}}\frac{\partial u_{f^{1}}}{\partial t}=2m_2\pi,$$
where $m_1, m_2\in\Z$. Thus
$$\left|\int_{S^{1}\times(-\infty,0]}K_{f^{1}}d\mu_{f^{1}}\right|\geq\pi,\s
\mbox{and}\s
\left|\int_{S^{1}\times[0,+\infty)}K_{f^{1}}d\mu_{f^{1}}\right|\geq\pi.$$
It follows from the above that
$$\int_{S^{1}\times(-\infty,+\infty)}|A_{f^{1}}|^{2}d\mu_{f^{1}}\geq4\pi,$$
which contracts that $\Lambda<4\pi$. Thus \eqref{neck3} holds true when $m=1$.

Assuming the lemma is true for $m-1$, we will prove that it also holds for $m$. First of all, if \eqref{neck3} holds for any $l_k\in[0,T_k]$ with $l_k\rightarrow+\infty$ and $T_k-l_k\rightarrow +\infty$, then the lemma \ref{cb1} follows obviously. Otherwise, there exists a sequence $l_k\in[0,T_k]$ satisfying $l_k\rightarrow+\infty$ and $T_k-l_k\rightarrow +\infty$ such that
$$\int_{S^{1}\times\{l_k\}}\frac{\partial u_k}{\partial t}= 2m\pi+\pi, \s \mbox{for some}\  m\in\Z.$$
Adapting the same argument as above, we know there exists an $f^{1}\in W^{2,2}_{conf,loc} (S^{1}\times \R)$ which is the generalized limit of $f_k(\theta,l_k+t)$ and satisfies
$$\int_{S^{1}\times \R}|A_{f^{1}}|^{2}d\mu_{f^{1}}\geq4\pi.$$
Thus we can find $T$, such that
$$\int_{S^{1}\times[0,l_k-T]}|A_{f_k}|^{2}d\mu_{f_k}<4(m-1)\pi,\s\mbox{and} \s\int_{S^{1}\times[l_k+T,T_k]}|A_{f_k}|^{2}d\mu_{f_k}<4(m-1)\pi.$$
Using the induction hypothesis on $[0,l_k-T]$ and $[l_k+T,T_k]$, we can find
$$0=\overline{t}_{k}^{0}<\overline{t}_{k}^{1}<\cdots<\overline{t}_k^{\overline{l}}<\overline{t}_k^{\overline{l}+1}=l_k-T\s\mbox{and} \s l_k+T=\hat{t}_{k}^{0}<\hat{t}_{k}^{1}<\cdots<\hat{t}_k^{\hat{l}}<\hat{t}_k^{\hat{l}+1}=T_k$$
which satisfy \eqref{neck1}, \eqref{neck2}, and \eqref{neck3}.

Set
$$
t_k^{i}=\left\{\begin{aligned}
\overline{t}_k^{\overline{l}}\s\s    & \mbox{when} \s i\leq\overline{l},\\
l_k \s\s    & \mbox{when} \s i=\overline{l}+1,\\
\hat{t}_k^{i-\overline{l}-1}\s\s   &\mbox{otherwise}.
\end{aligned}\right.
$$
It is easy to check that the above partition satisfies \eqref{neck1}, \eqref{neck2}, and \eqref{neck3}. Thus we complete the proof.
\end{proof}

Now, let
$$f_k^{i}(\theta,t)=\frac{f_k(\theta, t_k^{i}+t)-f_k(0,t_k^{i})}{\diam(f_k(S^{1}\times\{t_{k}^{i}\}))}.$$
Since $f_k$ has no concentration, we know $f_k^{i}$ converges weakly in $W^{2,2}_{loc}(S^{1}\times \R)$ to some $f^{i}\in W^{2,2}_{conf,loc}(S^{1}\times \R,\R^{n})$. We call such $f^{i}$ are bubbles of $f_k$ (note that $f^{i}$ may be trivial, here trivial means $\int K_{f^{i}} d\mu_{f^{i}}=0$). We have the following proposition.
\begin{pro}
If $f_k$ has no concentration  on $S^{1}\times[0,T_k]$, then we have
$$
\lim_{k\rightarrow \infty}\int_{S^{1}\times[0,T_k]}K_{f_k}d\mu_{f_k}=\int_{S^{1}\times[0,+\infty)}K_{f^{+}}d\mu_{f^{+}}+\int_{S^{1}\times(-\infty,0]}K_{f^{-}}d\mu_{f^{-}}+\sum_{i=1}^{l}\int_{S^{1}\times \R}K_{f^{i}}d\mu_{f^{i}}.
$$
\end{pro}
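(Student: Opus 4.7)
The strategy is to split $S^{1}\times[0,T_k]$ via the partition from Lemma~\ref{cb1} into three kinds of pieces---top/bottom slabs, bubble windows, and necks---and evaluate the Gauss curvature integral on each piece separately.

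Concretely, I choose cutoff lengths $R_k^i\to\infty$ with $R_k^i=o(t_k^{i+1}-t_k^i)$ and $R_k^i=o(t_k^i-t_k^{i-1})$. Set the bubble windows $B_k^i=S^{1}\times[t_k^i-R_k^i,t_k^i+R_k^i]$ for $i=1,\ldots,d$, the top/bottom slabs $T_k^+=S^{1}\times[0,R_k^0]$ and $T_k^-=S^{1}\times[T_k-R_k^{d+1},T_k]$, and the necks $N_k^i=S^{1}\times[t_k^i+R_k^i,\,t_k^{i+1}-R_k^{i+1}]$ for $i=0,\ldots,d$. On each bubble window, the rescaled sequence $f_k^i$ converges weakly in $W^{2,2}_{loc}(S^{1}\times\R)$ to $f^i$. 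Since $\int K\,d\mu$ is invariant under affine rescalings of the target, the preceding Gauss curvature convergence lemma applied to each fixed box $S^{1}\times[-R,R]$, followed by a diagonal choice of $R_k^i$, gives $\int_{B_k^i}K_{f_k}\,d\mu_{f_k}\to\int_{S^{1}\times\R}K_{f^i}\,d\mu_{f^i}$. The same argument applied to $T_k^{\pm}$ with $f^{\pm}$ in place of $f^i$ handles the two slabs, yielding in the limit $\int_{S^{1}\times[0,\infty)}K_{f^+}\,d\mu_{f^+}$ and $\int_{S^{1}\times(-\infty,0]}K_{f^-}\,d\mu_{f^-}$ respectively.

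The heart of the matter is the claim that $\int_{N_k^i}K_{f_k}\,d\mu_{f_k}\to 0$ on each neck. Since no branch point lies in a neck, the equation $-\Delta u_k=K_{f_k}e^{2u_k}$ holds pointwise there, and Green's theorem yields
\[\int_{N_k^i}K_{f_k}\,d\mu_{f_k}=\alpha_k(t_k^i+R_k^i)-\alpha_k(t_k^{i+1}-R_k^{i+1}),\qquad \alpha_k(l):=\int_{S^{1}\times\{l\}}\frac{\partial u_k}{\partial t}.\]
Using convergence on the adjacent bubble window together with Theorem~\ref{infty} and Remark~\ref{rmk}(2) (applied after passing to the punctured-disc coordinate $z=e^{-t-i\theta}$ that turns the cylinder end into a puncture), one reads off that $\alpha_k(t_k^i+R_k^i)\to 2M_i^+\pi$ and $\alpha_k(t_k^{i+1}-R_k^{i+1})\to 2M_{i+1}^-\pi$ for some $M_i^{\pm}\in\Z$. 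If these two integers differed, the continuity of $l\mapsto\alpha_k(l)$ and the intermediate value theorem would force an $l_k\in[t_k^i+2R_k^i,\,t_k^{i+1}-2R_k^{i+1}]$ with $\alpha_k(l_k)=(2m+1)\pi$ for some $m\in\Z$; such $l_k$ satisfies both $l_k-t_k^i\to\infty$ and $t_k^{i+1}-l_k\to\infty$, contradicting \eqref{neck3}. Hence $M_i^+=M_{i+1}^-$ and the neck integral tends to $0$; summing the contributions from the slabs, bubble windows, and necks produces the stated identity.

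I expect the main obstacle to be the quantization step $\alpha_k(t_k^i\pm R_k^i)\to 2M_i^{\pm}\pi\in 2\pi\Z$: it demands careful bookkeeping of the conformal factor between the cylinder and the punctured-disc frames, so that either Theorem~\ref{infty} (when the bubble end is complete) or Remark~\ref{rmk}(2) (when the bubble end is a removable singularity with branching order) delivers the quantized flux. Once this is in place, the ``no-neck'' conclusion rests crucially on the maximality of the partition built in Lemma~\ref{cb1}, which is precisely what forbids intermediate crossings of odd multiples of $\pi$.
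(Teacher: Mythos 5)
Your proposal is correct and follows essentially the same route as the paper: the paper likewise reduces everything to showing the neck integral vanishes, identifies the neck integral with the difference of boundary fluxes $\int_{S^1\times\{\cdot\}}\partial u_k/\partial t$, quantizes each limit to $2m\pi$ via Theorem \ref{infty} and Remark \ref{rmk}, and rules out $m_1\neq m_2$ by continuity of the flux plus the intermediate value theorem contradicting \eqref{neck3}. Your write-up is merely more explicit about the window/slab/neck decomposition and the diagonal choice of cutoffs, which the paper leaves implicit.
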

\begin{proof}
It is enough for us to prove the integral of Gauss curvature on the neck domain connecting $f^{i}$ and $f^{i+1}$ equals to $0$, i.e.,
$$\lim_{T\rightarrow \infty}\lim_{k\rightarrow \infty}\int_{S^{1}\times [t_k^{i}+T,t_k^{i+1}-T]}K_{f_k}d\mu_{f_k}=0.$$
By Theorem \ref{infty} and Remark \ref{rmk}, there holds
$$\lim_{T\rightarrow +\infty}\lim_{k\rightarrow +\infty}\int_{S^{1}\times\{t_k^{i}+T\}}\frac{\partial u_k}{\partial t}=\lim_{s\rightarrow +\infty}\int_{S^{1}\times\{s\}}\frac{\partial u_{f^{i}}}{\partial t}= 2m_1\pi \s\mbox{for some} \s m_1\in\Z,$$ and
$$\lim_{T\rightarrow +\infty}\lim_{k\rightarrow \infty}\int_{S^{1}\times\{t_k^{i+1}-T\}}\frac{\partial u_k}{\partial t}=\lim_{s\rightarrow -\infty}\int_{S^{1}\times\{s\}}\frac{\partial u_{f^{i+1}}}{\partial t}= 2m_2\pi  \s\mbox{for some} \s m_2\in\Z.$$
We claim that $m_1=m_2$. If $m_1\neq m_2$, by the fact that $\int_{S^{1}\times\{s\}}\frac{\partial u_k}{\partial t}$ is continuous function, it is easy to see that there exists a sequence $l_k$ with $l_k-t_k^{i}\rightarrow +\infty$ and $t_k^{i+1}-l_k\rightarrow +\infty$ satisfying
$$\int_{S^{1}\times \{l_k\}}\frac{\partial u_k}{\partial t}=2m\pi+\pi, \s \mbox{where}\s m=\min\{m_1.m_2\},$$
which contradicts to \eqref{neck3}. Thus
\begin{equation}
\lim_{T\rightarrow \infty}\lim_{k\rightarrow \infty}\int_{S^{1}\times [t_k^{i}+T,t_k^{i+1}-T]}K_{f_k}d\mu_{f_k}=2m_1\pi-2m_2\pi=0.\end{equation}
From the above it is easy to see that the integral of Gauss curvature on the neck domain equals to $0$. Hence we complete the proof.
\end{proof}

\subsection{When $f_k$ has concentration} In this subsection, we will show that how to find out all the bubbles when $f_k$ has concentration on $[0,T_k]$. Using  a similar  induction argument as in the proof of Lemma \ref{cb1}, we have the following
\begin{lem}
There exists a subsequence of $f_k$ and $0=d_k^{0}<d_k^{1}<\cdots<d_k^{l}<d_k^{l+1}=T_k$ with $l<\Lambda/4\pi$, such that
\begin{eqnarray}
d_k^{i+1}-d_k^{i}\rightarrow +\infty,  \s\s i=1,\cdots, l+1;\label{neck4}\\
\int_{S^{1}\times(d_k^{i}-1,d_k^{i}+1)}|A_{f_k}|^{2}d\mu_{f_k}\geq 4\pi,\s\s i=1,\cdots, l;\label{neck5}\\
\lim_{T\rightarrow \infty}\liminf_{k\rightarrow +\infty}\sup_{t\in[d_k^{i}+T,d_k^{i+1}-T]}\int_{S^{1}\times (t-1,t+1)}|A_{f_k}|^{2}d\mu_{f_k}<4\pi, \s \s i=0,\cdots, l.\label{neck6}
\end{eqnarray}
\end{lem}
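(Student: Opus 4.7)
The plan is to mimic the inductive structure of the proof of Lemma \ref{cb1}, but with the role of the circle flux $\int_{S^{1}\times\{t\}}\partial_{t}u_{k}$ now played by the strip concentration
$$\Phi_{k}(t):=\int_{S^{1}\times(t-1,t+1)}|A_{f_{k}}|^{2}d\mu_{f_{k}}.$$
I induct on the smallest positive integer $m$ satisfying $\Lambda<4m\pi$. In the base case $m=1$ one has $\Phi_{k}(t)<4\pi$ for every $t$, so the partition $l=0$, $d_{k}^{0}=0$, $d_{k}^{1}=T_{k}$ already works: \eqref{neck4}--\eqref{neck5} are vacuous and \eqref{neck6} is immediate.

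For the inductive step, fix $m\geq 2$ and assume the lemma whenever the total curvature is below $4(m-1)\pi$. Call a sequence $t_{k}\in(0,T_{k})$ an \emph{interior concentration sequence} if $t_{k}\to\infty$, $T_{k}-t_{k}\to\infty$ and $\liminf_{k\to\infty}\Phi_{k}(t_{k})\geq 4\pi$. If no such sequence exists, I take $l=0$; the only nontrivial point is \eqref{neck6} with $i=0$, whose failure would force, for every $T>0$ and every $\varepsilon>0$ along a subsequence, a $t_{k}\in[T,T_{k}-T]$ with $\Phi_{k}(t_{k})\geq 4\pi-\varepsilon$, and a standard diagonal extraction would then produce exactly a forbidden interior concentration sequence.

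If instead an interior concentration sequence $(t_{k})$ exists, set $d_{k}^{*}:=t_{k}$ and, for each fixed $R>1$, apply the induction hypothesis to the two restrictions of $f_{k}$ to $S^{1}\times[0,d_{k}^{*}-R]$ and $S^{1}\times[d_{k}^{*}+R,T_{k}]$ (each reparametrized to start at $0$). The three strips $[0,d_{k}^{*}-R]$, $(d_{k}^{*}-1,d_{k}^{*}+1)$ and $[d_{k}^{*}+R,T_{k}]$ are disjoint, and $\liminf_{k}\Phi_{k}(d_{k}^{*})\geq 4\pi$, so each outer piece carries curvature strictly below $\Lambda-4\pi+o(1)<4(m-1)\pi$ for large $k$. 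Splicing in $d_{k}^{*}$ between the two sub-partitions produced by the induction, and then diagonalizing as $R\to\infty$, gives the desired full partition $0=d_{k}^{0}<\cdots<d_{k}^{l+1}=T_{k}$.

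The main obstacle is the diagonalization. For each fixed $R$ the induction yields interior points on either side satisfying \eqref{neck4}--\eqref{neck6}, and one must choose $R=R_{k}\to\infty$ slowly enough, with $R_{k}=o(\min\{d_{k}^{*},T_{k}-d_{k}^{*}\})$, that the junction separations $(d_{k}^{*}-R_{k})-d_{k}^{p}\to\infty$ and $d_{k}^{p+2}-(d_{k}^{*}+R_{k})\to\infty$ persist and that the $\sup$-bound \eqref{neck6} remains valid across the patched intervals near $d_{k}^{*}\pm R_{k}$. Both properties follow by passing to successive subsequences, but the extraction must be carried out with some care; assuming it is, the counting bound $l<\Lambda/4\pi$ is then automatic from the induction since each extracted $d_{k}^{*}$ consumes at least $4\pi$ of total curvature.
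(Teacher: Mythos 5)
Your argument is the one the paper intends: the paper offers no proof of this lemma beyond the remark that it follows by ``a similar induction argument as in the proof of Lemma \ref{cb1}'', and your induction on the least $m$ with $\Lambda<4m\pi$ --- extract an interior concentration sequence, cut, apply the hypothesis to the two complementary sub-cylinders, splice --- is exactly that argument, with the circle flux of Lemma \ref{cb1} replaced by the strip quantity $\Phi_k(t)=\int_{S^{1}\times(t-1,t+1)}|A_{f_k}|^{2}d\mu_{f_k}$. The base case, the contradiction argument when no interior concentration sequence exists, and the counting $l<\Lambda/4\pi$ via disjointness of the strips are all fine. The one step you leave open, the diagonalization over $R\to\infty$, is a complication you have introduced yourself and do not need: cut at the \emph{fixed} distance $2$, i.e.\ apply the induction hypothesis to $S^{1}\times[0,d_k^{*}-2]$ and $S^{1}\times[d_k^{*}+2,T_k]$. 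Each piece carries curvature at most $\Lambda-\Phi_k(d_k^{*})\leq\Lambda-4\pi+o(1)<4(m-1)\pi$ for large $k$, so the hypothesis applies; the junction separations $d_k^{*}-\overline{d}_k^{\overline{l}}=2+(\overline{d}_k^{\overline{l}+1}-\overline{d}_k^{\overline{l}})\to\infty$ are automatic; \eqref{neck5} at the inherited interior points holds because the strip integral of the full map dominates that of the restriction; and for the new interval $[\overline{d}_k^{\overline{l}},d_k^{*}]$ in \eqref{neck6}, once $T\geq 3$ every strip $S^{1}\times(t-1,t+1)$ with $t\in[\overline{d}_k^{\overline{l}}+T,d_k^{*}-T]$ lies entirely in $S^{1}\times[0,d_k^{*}-2]$ and in the range $[\overline{d}_k^{\overline{l}}+(T-2),(d_k^{*}-2)-(T-2)]$ controlled by the induction on the left piece, so
\begin{equation*}
\sup_{t\in[\overline{d}_k^{\overline{l}}+T,\,d_k^{*}-T]}\Phi_k(t)\leq\sup_{t\in[\overline{d}_k^{\overline{l}}+(T-2),\,(d_k^{*}-2)-(T-2)]}\Phi_k(t),
\end{equation*}
whose $\lim_{T}\liminf_{k}$ is $<4\pi$. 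Thus only finitely many successive subsequence extractions are needed and no $R_k\to\infty$ ever enters. One caveat, which the paper's own statement shares: your construction delivers $\liminf_{k}\Phi_k(d_k^{*})\geq 4\pi$, i.e.\ $\Phi_k(d_k^{*})\geq 4\pi-o(1)$, not the literal pointwise inequality \eqref{neck5}; one should either weaken \eqref{neck5} accordingly or observe that a nontrivial bubble of total curvature at least $4\pi$ forms at $d_k^{*}$, so the inequality holds for all large $k$ unless that total curvature is exactly $4\pi$.
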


From \eqref{neck6}, it is obviously that we can find $T'>0$ such that $f_k$ has no concentration on $S^{1}\times[d_k^{i}+T',d_k^{i+1}-T']$. By Lemma \ref{cb1}, the blow up behavior of $f_k$ and convergence behavior of integral of Gauss curvature at such components are clear.

Now, let $f_k^{i}(\theta,t)=f_k(\theta, d_k^{i}+t)$. If we denote the concentration points of $f_k^{i}$ by $\mathcal{S}(\{f_k^{i}\})$, then, we know there exists $\lambda_k=1$ or $\lambda_k\rightarrow 0$ such that $\frac{f_k^{i}}{\lambda_k}$ converges weakly in $W^{2,2}_{loc}(S^{1}\times\R\backslash \mathcal{S}(\{f_k^{i}\})$ to some $\hat{f}^{i}$ which is a branched conformal immersion from $S^{1}\times\R$ to $\R^{n}$. Note that the top of $f_k$ on $S^{1}\times[d_k^{i}+T', d_k^{i+1}-T']$ is just a part of generalized limit of $f_k^{i+1}$ and the bottom of $f_k$ on $S^{1}\times[d_k^{i}+T', d_k^{i+1}-T']$ is just a part generalized limit of $f_k^{i}$. Up to now, from the construction, in deed we know the bubbles come from two different ways. The one is the bubbles constructed by Lemma \ref{cb1} on the component $S^{1}\times [d_k^{i}+T',d_k^{i+1}-T']$; Another is $\{\hat{f}^{i}\}$ which is the generalized limit of $\{f_k^{i}\}$. We call all these bubbles are bubbles of first level of bubble tree. By Lemma \ref{cb1} again, we have
\begin{align*}\lim_{k\rightarrow \infty}\int_{S^{1}\times [0,T_k]}K_{f_k}d\mu_{f_k}=&\sum_{p\in \mathcal{S}(\{f_k^{i}\})}\lim_{r\rightarrow0}\lim_{k\rightarrow \infty}\int_{B_{r}(p)}K_{f_{k}^{i}}d\mu_{f_k^{i}}+\sum_{j}\int_{S^{1}\times\R}K_{f^{i}}d\mu_{f^{i}}\\
&+\int_{S^{1}\times[0,+\infty)}K_{f^{+}}d\mu_{f^{+}}+\int_{S^{1}\times(-\infty,0]}K_{f^{-}}d\mu_{f^{-}},
\end{align*}
where $f^{i}$ are all the bubbles of the first level of bubble tree.

For each $p\in \mathcal{S}(f_k^{i})$, we take a small $r$ such that $B_r(p)\subset S^{1}\times\R$ contains only one blowup point. We can use the same argument as in the construction of first level bubbles of $f_k$ to construct first level bubbles of $f_k^{i}$. We call them the second level of bubble tree.

Step by step, we can construct the third, forth, ... level of the bubble tree. Obviously such construction will stop after finite many steps. Finally, by deleting trivial bubbles, we get finite non-trivial bubbles $f^{1},\cdots, f^{m}$.

By Theorem \ref{infty} and Theorem \ref{remove}, we can regard $f^{i}$ as a conformal branched immersion from $\C$ to $\R^{n}$. Combining all the above arguments, we get the following theorem:
\begin{thm}\label{disk}
Let $f_{k}: D\rightarrow \R^{n}$ be a sequence of conformal branched immersions with branching order $m$ at $0$ which satisfies
\begin{enumerate}
\item[(1)] $$\sup_{k}\int_{D}(1+|A_{f_{k}}|^{2})d\mu_{f_{k}}<\infty;$$
\item[(2)] $f_{k}(D)$  can be extended to a closed branched immersed surface $f_{k}:\Sigma\rightarrow \R^{n}$ with
$$\int_{\Sigma_{k}}|A_{f_{k}\comp \Sigma, \R^{n}}|^{2}d\mu_{f_{k}}<\Lambda.$$
\end{enumerate}
Then for any $r<1$, there holds
$$\lim_{k\rightarrow\infty}\int_{D_{r}}K_{f_{k}}d\mu_{f_{k}}=\int_{D_{r}}K_{f_{0}}d\mu_{f_{0}}+\sum_{i=1}^{m}\int_{\mathbb{C}}K_{f^{i}}d\mu_{f^{i}},$$
where $f^{1}, ...\ , f^{m}$ are all the non-trivial bubbles.
\end{thm}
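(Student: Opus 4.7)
The plan is to combine the local Gauss curvature convergence established in Section 4.1 (valid away from the concentration set) with the bubble tree construction of Sections 4.2--4.4 (near concentration points), so that every unit of Gauss curvature mass is accounted for either by the weak limit $f_0$ on the regular part or by one of the non-trivial bubbles.

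First I would localize on the concentration set. Because each point of $\mathcal{S}(\{f_k\})$ absorbs at least $\gamma_n$ units of the uniformly bounded energy $\int_{\Sigma}|A_{f_k}|^{2}d\mu_{f_k}<\Lambda$, the set $\mathcal{S}(\{f_k\})\cap D_r$ is finite, say $\{p_1,\dots,p_s\}$. Choose pairwise disjoint discs $B_\delta(p_j)\subset D_r$. On the regular part $D_r\setminus\bigcup_j B_\delta(p_j)$, Theorem \ref{thm1} together with Theorem \ref{thm2} provides a subsequence (after a possible rescaling $f_k'=(f_k-f_k(x_0))/\lambda_k$) converging weakly in $W^{2,2}_{\mathrm{loc}}$ to the generalized limit $f_0$, and the first lemma of Section 4.1 then upgrades this to convergence of curvature integrals,
$$\lim_k \int_{D_r\setminus\bigcup_j B_\delta(p_j)} K_{f_k}\,d\mu_{f_k}=\int_{D_r\setminus\bigcup_j B_\delta(p_j)} K_{f_0}\,d\mu_{f_0}.$$
Because $K\,d\mu$ is invariant under affine rescalings $f\mapsto\lambda^{-1}(f-c)$, there is no ambiguity between working with $f_k$ and with $f_k'$. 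Letting $\delta\to 0$ extracts the regular contribution $\int_{D_r} K_{f_0}\,d\mu_{f_0}$, since $K_{f_0}\,d\mu_{f_0}$ is an integrable signed measure on $D_r$ for the branched $W^{2,2}$ limit.

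Next I would run the bubble-tree construction at each concentration point $p_j$ exactly as in Sections 4.2--4.4. Pick the smallest scale $r_k$ and center $z_k\to p_j$ at which $\gamma_n-\epsilon_0$ of $\int|A_{f_k}|^{2}$ concentrates, producing a first bubble $f^1$ on $\C$. Passing to cylindrical coordinates $S^1\times[0,T_k]$ with $T_k=-\log r_k$ splits the remaining analysis into the outer region (already handled), the inner disc $B_{Rr_k}(z_k)$ (producing $f^1$), and the long cylindrical neck. If no further concentration occurs on the neck, Lemma \ref{cb1} and the Proposition following it already give an exact decomposition of the neck curvature integral as the top, bottom, and intermediate bubble contributions, with zero residual on each sub-neck. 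If concentration persists, the splitting lemma of Section 4.4 isolates the concentrated sub-cylinders and one recurses; the recursion bottoms out because each non-trivial bubble consumes at least $4\pi$ of $\int|A|^2$, so at most $\Lambda/(4\pi)$ levels appear. Summing across the levels yields, at each $p_j$,
$$\lim_{\delta\to 0}\lim_k \int_{B_\delta(p_j)} K_{f_k}\,d\mu_{f_k}=\sum_{i\in I_j}\int_\C K_{f^i}\,d\mu_{f^i},$$
where each bubble is promoted from a cylindrical branched conformal immersion to one on $\C$ via Theorem \ref{infty} and Theorem \ref{remove}. Adding the regular contribution and the contributions from each concentration point produces the asserted identity.

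The main obstacle is the neck analysis: one must show that the Gauss curvature integral on every long cylindrical neck vanishes in the limit. This is achieved by Lemma \ref{cb1} through a careful choice of the splitting times $t_k^i$ so that, on the interior of each sub-interval, the boundary flux $\int_{S^1\times\{t\}}\partial_t u_k$ never takes a value of the form $2m\pi+\pi$; together with Theorem \ref{infty} and Remark \ref{rmk}, this forces the limiting boundary fluxes at the two ends of each neck to coincide as integer multiples of $2\pi$, and then Stokes applied to $-\Delta u_k=K_{f_k}e^{2u_k}$ drives the neck curvature integral to zero. Propagating this no-neck property through every level of the bubble tree, and matching the rescaling scales consistently via the conformal invariance of $K\,d\mu$, is what makes the identity in Theorem \ref{disk} close up.
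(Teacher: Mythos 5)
Your proposal follows essentially the same route as the paper: the local curvature-integral convergence lemma of Section 4.1 on the regular part, the first-bubble rescaling and passage to cylindrical coordinates of Section 4.2, the flux argument of Lemma \ref{cb1} (avoiding boundary flux values $2m\pi+\pi$ so that the two ends of each neck carry the same integer multiple of $2\pi$ and the neck curvature integral vanishes), and the finite recursion of Section 4.4 when concentration persists on a sub-cylinder. The decomposition, the key lemmas invoked, and the termination argument all match the paper's proof, so the proposal is correct and not materially different.
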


Now, we are ready to prove the main Theorem \ref{identity}.
\subsection{Proof of Theorem \ref{identity}:}
By Theorem \ref{gauss}, we know that for any $k$, there holds
$$\int_{\Sigma}K_{f_k}d\mu_{f_k}=2\pi\chi(\Sigma)+2\pi b,$$
here $b$ is the sum of the branching order of the branch points. Since  $\mathcal{S}(\{f_{k}\})$ is a finite discrete set, assume $\mathcal{S}(\{f_{k}\})=\{p_{1},\ p_{2},..., p_{m}\}$. We can choose $\delta$, such that $B_{\delta}(p_{i})\cap B_{\delta}(p_{j})=\emptyset$. Taking an isothermal coordinate system around $p_{i}$, the result can be directly deduced from Theorem \ref{disk}.

\section{Proof of theorem \ref{identity2}}
Before giving the proof, we first state the following theorem which gives an existence and compactness of conformal parameter under metric convergence.
\begin{thm}[\cite{DeTurck-Kazadan}]\label{con-metric}
Let $h_{k}$, $h_{0}$ be smooth Riemann metrics on a surface $M$, such that $h_{k}\rightarrow h_{0}$ in $C^{s,\alpha}(M)$, where $s\in \mathbb{N}$, $\alpha\in(0,l)$. Then for each point $z\in M$, there exists neighborhoods $U_{k}$, $U_{0}$ of $z$ and smooth conformal diffeomorphisms $\vartheta_{k}: D\rightarrow U_{k}$ and $\vartheta_{0}: D\rightarrow U_{0}$, such that $\vartheta_{k}\rightarrow \vartheta_{0}$ in $C^{s+1,\alpha}(\overline{D}, M)$.
\end{thm}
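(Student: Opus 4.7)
\emph{Proof plan.} The assertion is purely local, so I fix $z\in M$ and choose a smooth coordinate chart $(x^{1},x^{2})$ centered at $z$ on a fixed ball $B$, in which every metric $h_{k}$ is represented by a symmetric positive matrix $(h_{k,ij})$ satisfying $h_{k,ij}\to h_{0,ij}$ in $C^{s,\alpha}(\overline{B})$. My plan is to construct an isothermal chart $\psi_{k}\colon B'\to\R^{2}$ for each $h_{k}$ by the classical harmonic-conjugate method, obtain $\psi_{k}\to\psi_{0}$ via Schauder theory, normalize the target to a common disc, and then set $\vartheta_{k}=\psi_{k}^{-1}$.

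For each $k$ (including $k=0$) I solve the Dirichlet problem
$$\Delta_{h_{k}}u_{k}=0\text{ in }B,\qquad u_{k}|_{\partial B}=x^{1}.$$
The coefficients of $\Delta_{h_{k}}$ are rational expressions in the $h_{k,ij}$ and their first derivatives, hence they converge to those of $\Delta_{h_{0}}$ in $C^{s,\alpha}$ (respectively $C^{s-1,\alpha}$ for the lower order part). Writing $v=u_{k}-u_{0}$ and noting that $\Delta_{h_{k}}v=(\Delta_{h_{0}}-\Delta_{h_{k}})u_{0}$ with $v|_{\partial B}=0$, standard Schauder estimates give $u_{k}\in C^{s+2,\alpha}(\overline{B})$ with $u_{k}\to u_{0}$ in $C^{s+2,\alpha}(\overline{B})$. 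On a smaller simply connected ball $B'\subset B$ the $1$-form $*_{h_{k}}du_{k}$ is closed, so there is a unique $v_{k}\in C^{s+2,\alpha}(\overline{B'})$ satisfying $dv_{k}=*_{h_{k}}du_{k}$ and $v_{k}(z)=0$. The conjugacy relation makes $\psi_{k}:=(u_{k},v_{k})$ a conformal map from $(B',h_{k})$ into $\R^{2}$, and H\"older algebra applied to $*_{h_{k}}du_{k}\to *_{h_{0}}du_{0}$ yields $\psi_{k}\to\psi_{0}$ in $C^{s+2,\alpha}(\overline{B'},\R^{2})$.

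Because $\psi_{0}$ is conformal and $du_{0}(z)\neq 0$—after shrinking $B$ the rescaling argument shows $u_{0}$ is close to the Euclidean harmonic extension of $x^{1}$, whose gradient at the center is $(1,0)$—the Jacobian of $\psi_{0}$ is nonzero at $z$, so the inverse function theorem makes $\psi_{0}$ a $C^{s+2,\alpha}$ diffeomorphism from some neighborhood of $z$ onto an open set containing a disc $D_{2r}(\psi_{0}(z))$. Since $\psi_{k}\to\psi_{0}$ in $C^{s+2,\alpha}$, for $k$ large $\psi_{k}$ is also a diffeomorphism onto a set containing $D_{r}(\psi_{k}(z))$, with Jacobian uniformly bounded away from zero. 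Composing with the affine normalization $w\mapsto (w-\psi_{k}(z))/r$, which depends continuously on $k$ through $\psi_{k}(z)\to\psi_{0}(z)$, I may assume the target is the standard unit disc $D$; I set $U_{k}:=\psi_{k}^{-1}(D)$ and $\vartheta_{k}:=\psi_{k}^{-1}\colon D\to U_{k}$.

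It remains to pass from convergence of the $\psi_{k}$ to convergence of their inverses. The H\"older norm of an inverse map is controlled by that of the original map together with a uniform lower bound on the Jacobian, so $\psi_{k}\to\psi_{0}$ in $C^{s+2,\alpha}$ with Jacobians bounded below transfers to $\vartheta_{k}\to\vartheta_{0}$ in $C^{s+2,\alpha}(\overline{D},M)$, which is in particular the claimed $C^{s+1,\alpha}$ convergence. The principal technical obstacle is the bookkeeping in the previous step: one must guarantee that the target discs can be chosen of a common radius for all large $k$ and that the affine normalization preserves the convergence. Both issues are settled by the quantitative inverse function theorem applied to the $C^{s+2,\alpha}$-bounded family $\{\psi_{k}\}$, which furnishes a neighborhood and a disc radius depending only on the $C^{s+2,\alpha}$-norm of $\psi_{0}$ and on a uniform positive lower bound for $|\det D\psi_{0}|$.
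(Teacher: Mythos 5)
The paper never proves this statement: it is quoted verbatim as a known result of DeTurck and Kazdan (reference \cite{DeTurck-Kazadan}), so there is no internal proof to compare against. Your construction --- solve $\Delta_{h_k}u_k=0$ with boundary data $x^1$, take the harmonic conjugate $dv_k=*_{h_k}du_k$, and invert $\psi_k=(u_k,v_k)$ after a quantitative inverse function theorem --- is the classical route to isothermal coordinates and is exactly in the spirit of the harmonic-coordinate regularity theory of that reference, so the architecture of your argument is sound.

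The one genuine error is the regularity bookkeeping in the Schauder step. You correctly note that the lower-order coefficients of $\Delta_{h_k}$ involve first derivatives of $h_{k,ij}$ and hence converge only in $C^{s-1,\alpha}$, but you then conclude $u_k\in C^{s+2,\alpha}(\overline{B})$ with convergence in $C^{s+2,\alpha}$. Schauder theory cannot deliver this: writing $\Delta_{h_k}u=h_k^{ij}\partial_{ij}u+b_k^{i}\partial_i u$ with $b_k^{i}\in C^{s-1,\alpha}$ only, the equation $h_k^{ij}\partial_{ij}u_k=-b_k^{i}\partial_i u_k$ has right-hand side at best in $C^{s-1,\alpha}$ (the factor $b_k^{i}$ caps it no matter how regular $u_k$ is), so the bootstrap terminates at $u_k\in C^{s+1,\alpha}$; the same cap appears in divergence form, where the coefficients $\sqrt{\det h_k}\,h_k^{ij}$ are $C^{s,\alpha}$ and solutions gain exactly one derivative. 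This loss of one derivative relative to your claim is sharp and is precisely why the theorem asserts $C^{s+1,\alpha}$, not $C^{s+2,\alpha}$, convergence of the charts. Fortunately the corrected exponents still close your argument: $u_k\to u_0$ in $C^{s+1,\alpha}(\overline{B})$ (your identity $\Delta_{h_k}(u_k-u_0)=(\Delta_{h_0}-\Delta_{h_k})u_0$ has right-hand side tending to $0$ in $C^{s-1,\alpha}$, and the maximum principle controls the sup norm), hence $dv_k=*_{h_k}du_k\to *_{h_0}du_0$ in $C^{s,\alpha}$, so $\psi_k\to\psi_0$ in $C^{s+1,\alpha}$, and your non-degeneracy and inversion steps, with Jacobians uniformly bounded below, give $\vartheta_k\to\vartheta_0$ in $C^{s+1,\alpha}(\overline{D},M)$ --- exactly the assertion. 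Two smaller points: the rescaling that guarantees $du_0(z)\neq 0$ must logically be performed before solving the Dirichlet problem (fix the small radius of $B$ first, then solve), and you should record that each $\vartheta_k$ is smooth because each $h_k$ is smooth (elliptic regularity), the H\"older scale entering only the uniformity in $k$.
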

\begin{rem}\label{also}
By Theorem \ref{con-metric}, it is easy to see from the proof of Theorem \ref{thm1} in \cite{lamm-nguyen} that the statements of Theorem \ref{thm1} still hold for branched immersions $f_{k}\in W^{2,2}_{b,c}(D,\R^{n})$ with the induced metric $g_{k}=|z|^{2m}e^{2\omega_{k}}g_{0,k}$ if $(g_{0,k})_{ij}$ converges to $\delta_{ij}$ smoothly on $\overline{D}$.
\end{rem}

\subsection{Bubbles from long cylinders}\label{collar}
In this subsection, we develop the blowup analysis of conformal immersions from long cylinders to deal with the degeneration of complex structure. In fact, we have ever considered it in the previous section. In order to emphasis its usefulness and importance in handling the degenerating case, we still write it down.

Let  $f_{k}:S^{1}\times[0,T_{k}]\rightarrow \mathbb{R}^{n}$ with $T_{k}\rightarrow +\infty$ be a sequence of conformal immersions which satisfies
\begin{enumerate}
\item[(1)] $f_{k}(S^{1}\times[0,T_{k}])$ can be extended to a closed surface with
$$\int_{\Sigma_{k}}|A_{\Sigma_{k}}|^{2}d\mu<\Lambda;$$
\item[(2)] $f_{k}(\theta,t)$ has no concentration points on $S^{1}\times[0,+\infty)$ and $f_{k}(\theta,T_{k}+t)$ has no concentration points on $S^{1}\times(-\infty,0]$. That is to say, if we denote $\lambda_{k}$ and $\tau_k$ by $\diam{f_k(S^{1}\times\{1\})}$ and $\diam{f_k(S^{1}\times\{T_k-1\})}$ respectively, then $\frac{f_{k}(\theta,t)-f_{k}(\pi,1)}{\lambda_{k}}$ and $\frac{f_{k}(\theta,t+T_{k})-f_{k}(\pi,T_{k}-1)}{\tau_{k}}$ converge weakly in $W^{2,2}_{loc}$ to some $f^{+}\in W^{2,2}_{conf}(S^{1}\times [0,+\infty))$ and $f^{-}\in W^{2,2}_{conf}(S^{1}\times (-\infty,0])$ respectively;
\item[(3)] $\mu(f_{k})<\Lambda.$
\end{enumerate}
By the arguments in the previous section, the following theorem holds obviously.
\begin{thm}\label{collar-idtt}
Assume $\{f_{k}\}$ satisfies the above conditions, and  $f^{1}, f^{2}, \cdot\cdot\cdot, f^{m}$ are $m$ non-trivial bubbles of $f_k$. Then we have
$$\lim_{k\rightarrow+\infty}\int_{S^{1}\times[0,T_{k}]}K_{f_{k}}=\int_{S^{1}\times[0,+\infty)}K_{f_{0}^{+}}+\int_{S^{1}\times(-\infty,0]}K_{f_{0}^{-}}+\sum_{i=1}^{m}\int_{\mathbb{C}}K_{f^{i}}.$$
\end{thm}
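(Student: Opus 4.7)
Plan: This theorem is the cylindrical analogue of Theorem \ref{disk} in the no-concentration regime, with the two ends $t = 0$ and $t = T_k$ of the long cylinder playing the roles of the top $f^+$ and bottom $f^-$ used in the disk case. I will copy the argument of Lemma \ref{cb1} and of the proposition immediately following it almost verbatim.

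First, I apply Lemma \ref{cb1} to $S^1 \times [0, T_k]$, producing a partition $0 = t_k^0 < t_k^1 < \cdots < t_k^{d+1} = T_k$ satisfying \eqref{neck1}--\eqref{neck3}. Hypothesis (2) (no concentration near either end) together with Theorem \ref{thm2} makes the inductive construction in the proof of Lemma \ref{cb1} go through on the cylinder: at any intermediate height $l_k$ at which $\int_{S^1 \times \{l_k\}} \partial u_k/\partial t$ takes a value of the form $2m\pi + \pi$, the rescaled sequence $(f_k(\theta, l_k + \cdot) - f_k(0, l_k))/\mbox{diam}\, f_k(S^1 \times \{l_k\})$ converges to a nontrivial $W^{2,2}$ branched conformal immersion on $S^1 \times \R$ carrying at least $4\pi$ of $|A|^2$-energy, so the number of such steps is bounded by $\Lambda/(4\pi)$.

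For each interior index $1 \leq i \leq d$, I then set $f_k^i(\theta, t) = (f_k(\theta, t_k^i + t) - f_k(0, t_k^i))/\mbox{diam}\, f_k(S^1 \times \{t_k^i\})$. By Theorem \ref{thm2} a subsequence of $f_k^i$ converges weakly in $W^{2,2}_{loc}(S^1 \times \R)$ to some $f^i \in W^{2,2}_{conf,loc}(S^1 \times \R, \R^n)$, and by Theorems \ref{remove} and \ref{infty} applied through the conformal change of variable $(\theta, t) \mapsto \me^{-t + \mi \theta}$ this extends to a $W^{2,2}$ branched conformal immersion of $\C$, yielding the identification $\int_{S^1 \times \R} K_{f^i} d\mu_{f^i} = \int_\C K_{f^i} d\mu_{f^i}$. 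Discarding the trivial $f^i$ and relabelling gives the $m$ nontrivial bubbles of the statement, while at the two endpoints the rescaled limits are, by hypothesis (2), exactly $f^+$ and $f^-$.

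The crucial no-neck step then proceeds as in the proposition following Lemma \ref{cb1}. On each neck $S^1 \times [t_k^i + T, t_k^{i+1} - T]$, the identity $K_{f_k} \me^{2u_k} = -\Delta \omega_k$ combined with the divergence theorem (the cylinder has no branch points) yields
\begin{equation*}
\int_{S^1 \times [t_k^i + T, t_k^{i+1} - T]} K_{f_k} \, d\mu_{f_k} = \int_{S^1 \times \{t_k^{i+1} - T\}} \frac{\partial u_k}{\partial t} - \int_{S^1 \times \{t_k^i + T\}} \frac{\partial u_k}{\partial t}.
\end{equation*}
Letting $k \to \infty$ then $T \to \infty$, Theorem \ref{infty} and Remark \ref{rmk}(2) force each boundary integral to tend to an integer multiple of $2\pi$, and property \eqref{neck3} forces those two integers to agree (otherwise continuity of $r \mapsto \int_{S^1 \times \{r\}} \partial u_k/\partial t$ would produce a forbidden value $2m\pi + \pi$ somewhere in the neck). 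Hence every neck contribution vanishes, and summing the curvature contributions from the bubble pieces together with the $f^+$ and $f^-$ endpoint pieces gives the stated identity. The only obstacle is verifying that Lemma \ref{cb1} applies with two free cylindrical ends rather than with a disk boundary on one side; since the proof of Lemma \ref{cb1} is carried out purely on the abstract cylinder $S^1 \times [0, T_k]$ without reference to how the ends are capped, this transfer is immediate.
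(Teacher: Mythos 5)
Your overall route coincides with the paper's: the paper gives no separate argument for Theorem \ref{collar-idtt} beyond the remark that it ``holds obviously by the arguments in the previous section,'' i.e.\ by the cylinder bubble--tree construction of Section 4, and your neck analysis (the flux identity for $\int_{S^{1}\times\{t\}}\partial u_k/\partial t$, the quantization of the limits at $\pm\infty$ via Theorem \ref{infty} and Remark \ref{rmk}, and the use of \eqref{neck3} to force the two integers to agree) reproduces exactly the proof of the proposition following Lemma \ref{cb1}. Your observation that Lemma \ref{cb1} is intrinsic to the cylinder and does not care how the ends are capped is also correct.

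There is, however, a genuine gap: you treat only the case in which $f_k$ has no concentration on all of $S^{1}\times[0,T_k]$, but hypothesis (2) of the theorem does not guarantee this. It only asserts convergence of the two rescaled sequences on $S^{1}\times[0,+\infty)$ and $S^{1}\times(-\infty,0]$, i.e.\ absence of concentration at points at \emph{bounded} distance from the two ends; it does not exclude points $(\theta_k,t_k)$ with $t_k\rightarrow\infty$ and $T_k-t_k\rightarrow\infty$ at which $\int_{B_r(\theta_k,t_k)}|A_{f_k}|^{2}d\mu_{f_k}\geq\gamma_n$, and such interior concentration does occur in the applications (collars of degenerating surfaces). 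In that situation the hypothesis of Lemma \ref{cb1} fails and your partition $0=t_k^{0}<\cdots<t_k^{d+1}=T_k$ cannot be produced by that lemma alone. The missing step is the decomposition of subsection 4.4: one first chooses heights $0=d_k^{0}<\cdots<d_k^{l+1}=T_k$ satisfying \eqref{neck4}--\eqref{neck6}, applies Theorem \ref{thm2} to the rescaled maps $f_k(\theta,d_k^{i}+t)/\lambda_k$ to obtain generalized limits $\hat f^{i}$ which may themselves carry point concentrations and hence require the recursive (second, third, \dots\ level) bubble extraction, and only on the concentration--free sub-cylinders $S^{1}\times[d_k^{i}+T',d_k^{i+1}-T']$ does one invoke Lemma \ref{cb1} and your neck argument. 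Without this your list of bubbles is incomplete and the curvature identity can fail to account for the curvature absorbed at the interior concentration points.
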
\medskip

\subsection {\bf Proof of Theorem \ref{identity2}:} We now turn our attention to giving the proof of  Theorem \ref{identity2}. The proof of Theorem \ref{identity2} will consists the following three cases according to the genus of $\Sigma$.

\noindent {\bf Spherical case :} When $\Sigma$ is a sphere, since there is only one conformal structure on $S^{2}$, we may let $h_{k}\equiv h$. If the number of branch points of $f_k$ is less or equal to $3$, we can always find a M\"obius transformation $\phi_{k}$ which converts the branch points (or point) of $f_k$ to the fixed points (or point). Then $f_{k}'=f_k\comp \phi_k$ is branched conform immersion with the the same branch points . The desired result can be directly deduced by Theorem \ref{identity}.  Other case will be discussed blow by the moduli theory of  spaces with marked point.
 \medskip

\noindent {\bf Toric case:} If $f_{k}$ has no branch point on $\Sigma_{k}$. Suppose that $(\Sigma, g)$ is induced by lattice $\{1,a+b\mi\}$ in $\C$, where $-\frac{1}{2}<a\leq\frac{1}{2}$, $b>0$, $a^{2}+b^{2}\geq1$, and $a\geq0$ whenever $a^{2}+b^{2}=1$. Then the conformal map $f$ from $(\Sigma,g)$ into $\R^{n}$ can be composed with the projection $\C\rightarrow \Sigma$ to yield a conformal map $\widetilde{f}$ from $\C$ into $\R^{n}$ which satisfies
$$\widetilde{f}(z+\lambda)=\widetilde{f}(z),\s\s \mbox{for} \s\mbox{all} \ \ \lambda\in \Z\otimes\Z(a+b\mi).$$
Let $\Pi:\C\rightarrow S^{1}\times\R$ defined by $x+y\mi\rightarrow (2\pi x,2\pi y)$ be the conformal covering map, where $2\pi x$ and $2\pi (x+m)$ are the same point in $S^{1}$ for $m\in\N$. Then $(\Sigma, g)$ is conformal to $(S^{1}\times\R)/G$, where $G\cong \Z$ is the transformation group of $S^{1}\times\R$ generated by the mapping $(\theta,t)\rightarrow (\theta+2\pi a, t+2\pi b)$. Then $\widetilde{f}$ descents to a conformal map $f':S^{1}\times\R\rightarrow \R^{n}$, which satisfies $f'\comp \Pi=\widetilde{f}$.

Now we assume $(\Sigma, g_{k})=S^{1}\times\R/G_{k}$, where $G_{k}$ is generated by
$$(\theta,t)\rightarrow (\theta+\theta_{k},t+b_{k}),\ \ \mbox{where} \ \ b_{k}\geq\sqrt{\pi^{2}-\theta_{k}^{2}},\ \ \mbox{and} \ \theta_{k}\in[-\frac{\pi}{2},\frac{\pi}{2}].$$
In the moduli space $\mathcal{M}_{1}$ of genus 1 surface, $(\Sigma, g_{k})$ diverges if and only if $b_{k}\rightarrow+\infty$.

Then any $f_{k}\in W^{2,2}_{conf}((\Sigma, g_{k}),\R^{n})$ can be lifted to a conformal immersion $f_{k}':S^{1}\times\R\rightarrow\R^{n}$ with
$$f_{k}'(\theta,t)=f_{k}'(\theta+\theta_{k},t+b_{k}).$$
After translations, we may assume that $f_{k}'(\theta, t+\frac{b_{k}}{2})$ and $f_{k}'(\theta, t-\frac{b_{k}}{2})$ have no blow up points as $k\rightarrow\infty$.  We denote by $f_0^{-}$ and $f_{0}^{+}$ the generalized limit of
$f_{k}'(\theta, t+\frac{b_{k}}{2})$ and $f_{k}'(\theta, t-\frac{b_{k}}{2})$ respectively. Then $f_{k}'$ satisfies the condition in subsection \ref{collar}. By Theorem \ref{collar-idtt}, we have
\begin{thm}
$$\int_{S^{1}\times[0,+\infty)}K_{f_{0}^{+}}+\int_{S^{1}\times(-\infty,0]}K_{f_{0}^{-}}+\sum_{i}\int_{ \C}K_{f^{i}}d\mu_{f^{i}}=0,$$
where $f^{i} $ are all of the nontrivial bubbles of $f_k'$.
\end{thm}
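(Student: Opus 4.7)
The strategy is to identify the fundamental-domain curvature integral of $f_k'$ with $\int_\Sigma K_{f_k}\,d\mu_{f_k}$, which vanishes by Gauss-Bonnet, and then to invoke the collar identity Theorem \ref{collar-idtt}.

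\emph{Step 1 (vanishing on the fundamental domain).} Since $\Sigma$ is a torus and every $f_k$ is unbranched by hypothesis, Theorem \ref{gauss} gives $\int_\Sigma K_{f_k}\,d\mu_{f_k} = 2\pi\chi(\Sigma) + 2\pi b = 0$ for every $k$. The strip $S^1 \times [-b_k/2,\, b_k/2]$ is a fundamental domain for the $G_k$-action on $S^1\times\R$, so the conformal covering yields
$$\int_{S^1 \times [-b_k/2,\,b_k/2]} K_{f_k'}\,d\mu_{f_k'} \;=\; \int_\Sigma K_{f_k}\,d\mu_{f_k} \;=\; 0.$$

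\emph{Step 2 (hypotheses of Theorem \ref{collar-idtt}).} After the obvious shift I apply Theorem \ref{collar-idtt} with $T_k := b_k$ to $f_k'$ restricted to the above cylinder. First, $f_k'$ on the fundamental domain descends to the closed immersion $f_k:\Sigma\to\R^n$, and (\ref{willmore}) combined with $\chi(\Sigma)=0$ gives the uniform bound $\int_\Sigma |A_{f_k}|^2\,d\mu_{f_k} = 4W(f_k) \le 4\Lambda$. Secondly, the translations performed in the paragraph preceding the statement are exactly the no-concentration condition (2) of Theorem \ref{collar-idtt} at the two ends, and they produce the generalised limits $f_0^+$ and $f_0^-$. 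Thirdly, $\mu(f_k) < \Lambda$ is part of the standing assumption. Finally, degeneration of $(\Sigma, g_k)$ in $\mathcal{M}_1$ is equivalent to $b_k \to +\infty$, so $T_k \to +\infty$.

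\emph{Step 3 (conclusion).} Theorem \ref{collar-idtt} therefore gives
$$0 \;=\; \lim_{k\to\infty}\int_{S^1\times[-b_k/2,\,b_k/2]} K_{f_k'}\,d\mu_{f_k'} \;=\; \int_{S^1\times[0,\infty)} K_{f_0^+}\,d\mu_{f_0^+} + \int_{S^1\times(-\infty,0]} K_{f_0^-}\,d\mu_{f_0^-} + \sum_{i} \int_{\C} K_{f^i}\,d\mu_{f^i},$$
which is the claimed identity.

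The one delicate point of bookkeeping is matching the two notions of "bubble": a priori Theorem \ref{collar-idtt} captures only the bubbles concentrating inside the chosen fundamental domain, whereas the statement concerns \emph{all} nontrivial bubbles of $f_k'$ on the full cylinder $S^1\times\R$. By $G_k$-periodicity of $f_k'$, any concentration point outside $S^1\times[-b_k/2,\,b_k/2]$ is a $G_k$-translate of one inside and yields an isometric bubble, so the two lists coincide under the canonical identification and no curvature is double-counted. This is the only step which is not entirely formal, and it is the main (but rather mild) obstacle in the argument.
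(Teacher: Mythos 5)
Your proposal is correct and follows essentially the same route as the paper: identify the fundamental domain $S^{1}\times[-b_k/2,b_k/2]$ of the $G_k$-action with the torus so that the cylinder integral of $K_{f_k'}$ equals $2\pi\chi(\Sigma)+2\pi b=0$ by Theorem \ref{gauss}, translate so the boundary slices carry no concentration, and apply Theorem \ref{collar-idtt}. The paper leaves the verification of the hypotheses and the bubble bookkeeping implicit, so your extra detail (the bound $\int_\Sigma|A_{f_k}|^2=4W(f_k)$ and the $G_k$-periodicity argument matching the two lists of bubbles) only makes the same argument more complete.
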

If $f_{k}$ has branch points on $\Sigma_{k}$, we will consider the moduli space with marked points, see the details blow.
\medskip

\indent {\bf Hyperbolic case:}\label{hyper}
Let us firstly review the compactification of the moduli space of surface with marked points. We refer to \cite{Hummel, zhu} for more details on degenerating surface.
Let $\overline{\mathcal{M}}_{g,m}$ $(2g+m\geq 3)$ be the moduli space of compact Riemann surface of genus $g$ with $m$ marked points. Let $(\Sigma_{0}, x_{0,1},\cdots,x_{0,m})\in \overline{\mathcal{M}}_{g,m}$ with nodal point $\mathcal{N}=\{a_{1},\cdot\cdot\cdot, a_{m'}\}$. Geometrically, $\Sigma_{0}$ is obtained by pinching $m'$ non null homotopy closed curves which do not pass through any of $\{ x_{0,1},\cdots,x_{0,m}\}$ to points $a_{1},\cdots, a_{m}$, thus $\Sigma_{0}\backslash\mathcal{N}$ can be divided to finite components $\Sigma_{0}^{1},\cdots,\Sigma_{0}^{s}$. For each $\Sigma_{0}^{i}$, we can extend $\Sigma_{0}^{i}$ to a closed Riemann surface $\overline{\Sigma_{0}^{i}}$ by adding a point at each puncture. Moreover, the complex structure of $\Sigma_{0}^{i}$ can be extended smoothly to a complex structure of $\overline{\Sigma_{0}^{i}}$.

We say $h$ is a hyperbolic structure on $(\Sigma,x_{1},\cdots,x_{m})\in \mathcal{M}_{g,m}$ if $h$ is a complete metric on $\Sigma\backslash\{x_{1},\cdots,x_{m}\}$ with curvature $-1$ and finite volume. We say $h_{0}$ a hyperbolic structure on $ (\Sigma_{0},x_{0,1},\cdots,x_{0,m})$ if $h_{0}$ is a complete metric on
$$\Sigma_{0}\backslash\{a_{1},\cdots,a_{m'},x_{0,1},\cdots,x_{0,m}\}$$
with curvature $-1$ and finite volume. We denote
$$\Sigma_{0}(h_0,\delta)=\{p\in \Sigma_{0}\backslash\mathcal{N}: \injrad^{h_0}_{\Sigma_0\backslash\mathcal{N}}(p)<\delta\}\cup\mathcal{N},$$
and
$$\Sigma_{0}^{i,\delta}=\{x\in \Sigma_{0}^{i}: \injrad^{h_0}_{\Sigma_0\backslash\mathcal{N}}(x)>\delta\}.$$
We also denote $\Sigma_0(a_j,h_0,\delta)$ the connected component which contains $a_j$ of $\Sigma_0(h_0,\delta)$. 
For a surface $\Sigma$ with hyperbolic structure $h$ and with marked points $\{x_{1},\cdots,x_{m}\}$, we define $\Sigma^{*}=\Sigma\backslash \{x_{1},\cdots,x_{m}\}$, and $h^{*}$ to be a hyperbolic structure on $(\Sigma,x_{1},\cdots,x_{m})$ which is conformal to $h$ on $\Sigma^{*}$.

Let $\{(\Sigma_{k},x_{k,1},\cdots,x_{k,m})\}$ be a sequence of marked surfaces in $\mathcal{M}_{g,m}$ with hyperbolic structure $h_{k}$ and
$$ (\Sigma_{k},x_{k,1},\cdots,x_{k,m})\rightarrow (\Sigma_{0},x_{0,1},\cdots,x_{0,m})\ in\ \overline{\mathcal{M}}_{g,m} .$$

By proposition 5.1 in chapter 4 in \cite{Hummel}, there exits a maximal collection $\Gamma_{k}=\{\gamma_{k}^{1},...,\gamma_{k}^{m}\}$ of pairwise disjoint, simple closed curves in $\Sigma_{k}$ with $\ell_{k}^{j}=L(\gamma_{k}^{i})\rightarrow0$, such that after passing to a subsequence the following holds:
\begin{enumerate}
\item[(1)] There are maps $\varphi_{k}\in\ C^{0}(\Sigma_{k},\Sigma_{0}),$ such that $\varphi_{k}:\Sigma_{k}\backslash\Gamma_{k}\rightarrow\Sigma_{0}\backslash\mathcal{N}$ is a diffeomorphism and $\varphi_{k}(\gamma_{k}^{j})=a_{j}$ for $j=1,\cdots,m'$, and $\varphi_{k}(x_{k,j})=x_{0,j}$ for $j=1,\cdots,m$.
\item[(2)] For the inverse diffeomorphisms $\psi_{k}:\Sigma_{0}\backslash\mathcal{N}\rightarrow\Sigma_{k}\backslash\Gamma_{k}$, we have $\psi_{k}^{\ast}(h_{k})\rightarrow h_{0}$ in $C^{\infty}_{loc}(\Sigma_{0}^{*}\backslash\mathcal{N}),$ where $h_{0}$ determine a hyperbolic structure on $\Sigma_{0}\backslash\mathcal{N}.$
\item[(3)] Let $c_{k}$ be the complex structure over $\Sigma_{k}$, and $c_{0}$ be the complex structure on $\Sigma_{0}\backslash\mathcal{N}$. then $\psi_{k}^{\ast}(c_{k})\rightarrow c_{0}$ in $C^{\infty}_{loc}(\Sigma_{0}\backslash\mathcal{N})$.
\item[(4)] For each $\gamma_{k}^{j}$ as above, there is a collar $U_{k}^{j}$ containing $\gamma_{k}^{j}$, which is isometric to the cylinder
$$Q_{k}^{j}=S^{1}\times(-\frac{\pi^{2}}{\ell_{k}^{j}}+\tau_{k},\frac{\pi^{2}}{\ell_{k}^{j}}-\tau_{k})$$
with the metric
$$h_{k}^{j}=\left(\frac{\ell_{k}^{j}}{2\pi\cos(\frac{\ell_{k}^{j}}{2\pi}t)}\right)^{2}(dt^{2}+d\theta^{2}).$$
where $\tau_{k}=\frac{2\pi}{\ell_{k}^{j}}\arctan(\sinh(\frac{\ell_{k}^{j}}{2})).$ Moreover, for any $(\theta,t)\in S^{1}\times(-\frac{\pi^{2}}{\ell_{k}^{j}}+\tau_{k},\frac{\pi^{2}}{\ell_{k}^{j}}-\tau_{k})$, we have
$$\sinh(\injrad_{\Sigma_{k}}(\theta,t))\cos(\frac{\ell_{k}^{j}t}{2\pi})=\sinh(\frac{\ell_{k}^{j}}{2}).$$
\end{enumerate}

Let $\phi_{k}^{j}$ be the isometry between $Q_{k}^{j}$ and $U_{k}^{j}.$ Then $\varphi_{k}\circ\phi_{k}^{j}(\theta,\frac{\pi^{2}}{\ell_{k}^{j}}-\tau_{k}+t)\bigcup\varphi_{k}\circ\phi_{k}^{j}(\theta,-\frac{\pi^{2}}{\ell_{k}^{j}}+\tau_{k}+t)$ converges in $C^{\infty}_{loc}(S^{1}\times(-\infty,0)\bigcup S^{1}\times(0,+\infty))$ to an isometry from $S^{1}\times(-\infty,0)\bigcup S^{1}\times(0,+\infty)$ to $\Sigma_{0}(a_{j},h_{0},1)\backslash\{a_{j}\}$.

Now we let $\widetilde{f}_{k}=f_{k}\comp \psi_{k}$. Then by the above arguments we know
$$\widetilde{f}_{k}\in W^{2,2}_{conf,loc}(\Sigma_{0}\backslash \mathcal{N}, \psi_{k}^{*}h_{k}),$$
 and $\psi_{k}^{*}h_{k}\rightarrow h_{0}$ in $C^{\infty}_{loc}(\Sigma_{0}^{*}\backslash\mathcal{N})$. By Theorem \ref{con-metric} and Remark \ref{also}, we only need to consider the convergence near $\{x_{0,j},...,x_{0,m}\}$ and collars.

For any $x_{0,j}\in \{x_{0,j},...,x_{0,m}\}$, choose a complex coordinate $\{U,(x,y)\}$ on $\Sigma_{0}$ compatible with $c_{0}$, with $x_{0,j}=(0,0)$. Let $c_{k}'=\psi_{k}^{*}(c_{k})$. We set
$$e_{1}=\frac{\partial}{\partial x},\ e_{2}=c_{k}'(e_{1}),$$
and $h_{k}'$ is defined to be the metric on $U$
 $$h_{k}'(e_{1},e_{1})=h_{k}'(e_{2},e_{2})=1,\ h_{k}'(e_{1},e_{2})=0.$$
Then $h_{k}'$ is compatible with $c_{k}'$ and converge smoothly to a metric which is compatible with $c_{0}$ in $U$. From this we can regard  $\widetilde{f}_{k}$ as a conformal branched immersion from $(\bigcup\Sigma_{0}^{i},\psi_{k}^{*}h_{k})$ with the same branch points $\{x_{0,j},...,x_{0,m}\}$. Now we let $f_{0}^{i}$ be the generalized limit of $\widetilde{f}_{k}|_{\Sigma_{0}^{i}\backslash\cup_{j=1}^{m'}\Sigma_0(a_j,h_0,\delta)}$. As $\delta\rightarrow0$, $f_{0}^{i}$ can be considered as a conformal branched immersion from $\Sigma_{0}^{i}$ to $\mathbb{R}^{n}$. Thus we have
\begin{equation}\label{last1}
\lim_{\delta\rightarrow 0}\lim_{k\rightarrow\infty}\int_{\varphi_{k}^{-1}(\Sigma_{0}^{i}\backslash\cup_{j=1}^{m'}\Sigma_0(a_j,h_0,\delta))}K_{f_k}d\mu_{f_{k}}
=\int_{{\Sigma_{0}^{i}}}K_{f_{0}^{i}}d\mu_{f_{0}^{i}}+\sum_{m}\int_{\C}K_{f_i^{m}} d\mu_{f_i^{m}},
\end{equation}
where $f_0^{i}$ is the generalized limit of $\widetilde{f}_{k}$ on $\Sigma_{0}^{i}$ and  $\{f_{i}^{m}\}$ are bubbles of $\widetilde{f}_{k}$ on $\Sigma_{0}^{i}$.
Next, we consider the convergence of $f_{k}$ on the collars. We set $\hat{f}_{k}^{j}=f_{k}\comp \phi_{k}^{j}$ and $T_{k}^{j}=\frac{\pi^{2}}{l_{k}^{j}}-T$. We may choose $T$ to be sufficiently large such that $\hat{f}_{k}^{j}(T_{k}^{j}-t,\theta)$ and $\hat{f}_{k}^{j}(-T_{k}^{j}+t,\theta)$ have no blowup points. Thus $\hat{f}_{k}^{j}$ satisfies the condition in subsection \ref{collar}. Since
$$
\hat{f}_{k}^{j}=f_{k}\comp \phi_{k}^{j}=f_{k}\comp \psi_{k}\comp (\varphi_{k}\comp \phi_{k}^{j})=\widetilde{f}_{k}\comp (\varphi_{k}\comp \phi_{k}^{j}),
$$
we conclude from \eqref{last1} that  the convergence of $\hat{f}_{k}^{j}(T_{k}^{j}-t,\theta)$ and $\hat{f}_{k}^{j}(-T_{k}^{j}+t,\theta)$ has been handled.  Note that
\begin{equation}\label{last2}
\lim_{\delta\rightarrow 0}\lim_{k\rightarrow\infty}\int_{\varphi_{k}^{-1}(\Sigma_{0}(a_j,h_0,\delta))}K_{f_k}d\mu_{f_{k}}
=\lim_{T\rightarrow +\infty}\lim_{k\rightarrow +\infty}\int_{S^{1}\times [-T_k^{j}+T,T_k^{j}-T]}K_{\hat{f}_{k}^{j}}d\mu_{\hat{f}_{k}^{j}},
\end{equation}
we have
\begin{equation}\label{last3}\lim_{\delta\rightarrow 0}\lim_{k\rightarrow\infty}\int_{\varphi_{k}^{-1}(\Sigma_{0}(a_j,h_0,\delta))}K_{f_k}d\mu_{f_{k}}=\sum_{i'}\int_{\C}K_{f^{i'}},
\end{equation}
where $f^{i'}$ are all of the nontrivial bubbles of $\hat{f}_{k}^{j}$. Combining \eqref{last1} and \eqref{last3}, we obtain the desired consequence.

\medskip

\noindent {\bf Acknowledgment}:
The author would like to thank his supervisors Professor Yuxiang Li and Professor Youde Wang for their encouragement and inspiring advice. In particular, without the beneficial discussion with Professor Yuxiang Li and his help, this paper would not have been completed. The author is very grateful to the  referees for valuable suggestions and insightful comments. We want to thank Professor Lamm and Professor Nguyen for sincerely reminding me that Theorem 1.4 in the first manuscript version of this paper has been proved in their paper \cite{lamm-nguyen}. The work is supported by National Natural Science Foundation of China ( Grant Nos. 11671015 and 11731001).


{}

\vspace{1.0cm}

Guodong Wei

{\small\it Academy of Mathematics and Systems Sciences, Chinese Academy of Sciences, Beijing 100190,  P.R. China.}

{\small\it School of Mathematical Sciences, University of Chinese Academy of Sciences, Beijing 100049, China
}

{\small\it Email: weiguodong@amss.ac.cn}

\end{document}